\newtheorem{thm}{Theorem}[section]
\newtheorem{lem}[thm]{Lemma}        
\newtheorem{cor}[thm]{Corollary}
\newtheorem{obs}{Observation}
\newcommand{\openeop}{\qquad\hspace*{\fill} $\square$}
\newcommand{\floor}[2]{\left\lfloor\frac{#1}{#2}\right\rfloor}
\newcommand{\ceil}[2]{\left\lceil\frac{#1}{#2} \right\rceil}
\newcommand{\ds}[1]{{\displaystyle {#1}}}
\newcommand{\exend}{\qquad\hspace*{\fill}$\triangle$}
\begin{document}

\title{\bf Component Order Edge Connectivity, Vertex Degrees, and Integer Partitions}

\author{{\sc  M. Yatauro}\\
  {\small \sl Penn State University} \\[-4pt]
  {\small \sl Brandywine Campus } \\[-4pt]
  {\small \sl Media, PA 19063, U.S.A.  }
}

\date{}

\maketitle


\begin{abstract}
Given a finite, simple graph $G$, the $k$-component order edge connectivity of $G$ is the minimum number of edges whose removal results in a subgraph for which every component has order at most $k-1$.  In general, determining the $k$-component order edge connectivity of a graph is NP-hard. We determine conditions on the vertex degrees of $G$ that can be used to imply a lower bound on the $k$-component order edge connectivity of $G$. We will discuss the process for generating such conditions for a lower bound of 1 or 2, and we explore how the complexity increases when the desired lower bound is 3 or more. In the process, we prove some related results about integer partitions.
\end{abstract}

\section{Introduction}\label{s1}

We consider only finite, simple graphs without loops or multiple edges.
Our terminology and notation are standard except as indicated. In particular, for two graphs $G$, $H$ on
disjoint vertex sets, we will denote their \emph{disjoint union}
by $G\cup H$ and their \emph{join} by $G+H$. We also use $mG$ to denote the disjoint union of $m$ copies of the graph $G$.

Recall that a \emph{degree sequence} of a graph $G$ is a list of the degrees
of all the vertices of $G$, with repetition if multiple vertices
have the same degree.  In this paper the degree sequences are in
nondecreasing order (rather than in nonincreasing order).  If $\pi$ is a degree sequence of length $n$,
then we typically denote it as $\pi=(d_{1}\leq d_{2}\leq \cdots
\leq d_{n})$.  At times we may utilize exponents to indicate the
number of times a degree appears, e.g.,
$\pi=(2,2,2,2,4)=2^{4}4^{1}$. Given two sequences $\pi=(d_{1}\leq
d_{2}\leq \cdots \leq d_{n})$ and $\pi'=(d_{1}'\leq d_{2}'\leq
\cdots \leq d_{n}')$, we say that $\pi'$ \emph{majorizes} $\pi$,
denoted $\pi'\geq \pi$, if $d_{i}'\geq d_{i}$ for all $i$. A
sequence $\pi=(d_{1}\leq d_{2}\leq \cdots \leq d_{n})$ is a
\emph{graphical sequence} if there exists a graph $G$ with $\pi$
as its degree sequence, and we then call $G$ a \emph{realization}
of $\pi$. A graphical sequence $\pi$ can have more than one
distinct realization. If every realization of $\pi$ has property
$P$, we say that $\pi$ is \emph{forcibly $P$}. For example, the
graphical sequence $\pi=3^{6}$, whose unique realizations are $K_{3,3}$ and the 3-prism graph, is forcibly hamiltonian.

A number of existing results reference the degree sequence of a graph in order to
provide sufficient conditions for the graph to have certain
properties, such as hamiltonicity or $k$-connectedness. In
particular, sufficient conditions for~$\pi$ to be forcibly
hamiltonian were given by several authors, including the
following theorem of Chv\'atal~\cite{Chvatal72}.

\begin{thm}\label{thm:chvatal}
  \;Let $\pi=(d_1\le\dots\le d_n)$ be a graphical sequence, with $n\ge3$.
  If $d_i\le i<{\frac{n}{2}}\,\Longrightarrow\,d_{n-i}\ge n-i$, then~$\pi$
  is forcibly hamiltonian.
\end{thm}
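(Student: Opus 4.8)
The plan is to argue by contradiction: suppose $\pi$ satisfies the stated implication but admits a nonhamiltonian realization $G$. Since $n\ge 3$, the complete graph $K_n$ is hamiltonian, so $G$ is not complete and I may add missing edges one at a time, never completing a Hamiltonian cycle, until I reach an \emph{edge-maximal} nonhamiltonian graph $\bar G$ on the same vertex set. Adding edges only raises degrees, so the number of vertices of degree $\ge t$ never drops, and hence the sorted degree sequence $\bar\pi=(\bar d_1\le\cdots\le\bar d_n)$ of $\bar G$ majorizes $\pi$. The first thing I would verify is that Chv\'atal's condition is preserved under majorization: if $\bar d_i\le i<\frac n2$, then $d_i\le\bar d_i\le i$, so the hypothesis applied to $\pi$ yields $d_{n-i}\ge n-i$, whence $\bar d_{n-i}\ge d_{n-i}\ge n-i$. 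Thus $\bar\pi$ also satisfies the condition, and it suffices to reach a contradiction from $\bar G$. This reduction is what lets me exploit the strong structural fact that in an edge-maximal nonhamiltonian graph every nonadjacent pair is ``almost joinable.''

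The second step is the classical rotation (crossing) argument, which I expect to supply the key inequality. For any two nonadjacent vertices $u,v$ of $\bar G$, the graph $\bar G+uv$ is hamiltonian and every Hamiltonian cycle of it must use $uv$; deleting $uv$ yields a Hamiltonian path $x_1x_2\cdots x_n$ with $x_1=u$ and $x_n=v$. If for some index $i$ both $ux_{i+1}$ and $x_iv$ were edges of $\bar G$, then $x_1\cdots x_i x_n x_{n-1}\cdots x_{i+1}x_1$ would be a Hamiltonian cycle of $\bar G$, a contradiction. Hence the neighbor-index sets of $u$ and of $v$ along the path are disjoint subsets of $\{1,\dots,n-1\}$, which forces $\deg u+\deg v\le n-1$ for \emph{every} nonadjacent pair $u,v$.

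Finally I would combine this bound with Chv\'atal's condition to manufacture a forbidden pair. Choose nonadjacent $u,v$ with $\deg u\le\deg v$ maximizing $\deg u+\deg v$, and set $m=\deg u$; the bound gives $2m\le\deg u+\deg v\le n-1$, so $m<\frac n2$. Every non-neighbor $w\ne v$ of $v$ satisfies $\deg w+\deg v\le\deg u+\deg v$ by maximality, hence $\deg w\le m$; since $v$ has at least $n-1-\deg v\ge m$ non-neighbors (one being $u$ itself), at least $m$ vertices have degree $\le m$, so $\bar d_m\le m$. Chv\'atal's condition with $i=m$ now forces $\bar d_{n-m}\ge n-m$, i.e.\ there are at least $m+1$ vertices of degree $\ge n-m$. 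Since $u$ has only $m$ neighbors, it cannot be adjacent to all $m+1$ of them, and as $m<\frac n2$ gives $\deg u=m<n-m$ the vertex $u$ is not among them; so there is a vertex $a\ne u$ with $u\not\sim a$ and $\deg a\ge n-m$, whence $\deg u+\deg a\ge m+(n-m)=n$, contradicting the rotation bound. I expect the main obstacle to be the bookkeeping: making sure the vertices being counted are genuinely distinct so that ``$\bar d_m\le m$'' and ``$m+1$ high-degree vertices'' are sized correctly, and that the majorization reduction is airtight, since these are precisely the places where an off-by-one in the indices would spoil the decisive use of the strict inequality $m<\frac n2$.
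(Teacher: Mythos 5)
The paper does not actually prove Theorem~\ref{thm:chvatal}; it quotes it from Chv\'atal~\cite{Chvatal72}, so the only fair comparison is with the classical proof, and your proposal is exactly that argument: pass to an edge-maximal nonhamiltonian supergraph (valid because, as the paper itself stresses, the Chv\'atal condition is monotone under degree majorization, which you verify correctly), apply the path-rotation bound $\deg u+\deg v\le n-1$ for nonadjacent pairs, and use a degree-sum-maximal nonadjacent pair to produce the contradiction. All the counting is right: $m<\frac n2$ follows from $2m\le n-1$; the $\ge n-1-\deg v\ge m$ non-neighbors of $v$ give $\bar d_m\le m$; the condition then yields $m+1$ vertices of degree $\ge n-m$, none equal to $u$ and not all adjacent to $u$, forcing a nonadjacent pair of degree sum $\ge n$.

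One pinhole worth a line: you invoke the condition at index $i=m=\deg u$ without checking $m\ge 1$; if $m=0$ the index is out of range and the hypothesis says nothing. This cannot occur, but it needs a remark: an edge-maximal nonhamiltonian graph on $n\ge3$ vertices has no isolated vertex (indeed it is connected, since adding an edge to a disconnected graph leaves it either disconnected or with a cut edge, hence still nonhamiltonian, contradicting edge-maximality). With that one-line patch your proof is complete and agrees with the cited source; it also explains the paper's remark that when the condition fails at some $i<\frac n2$, the sequence is majorized by $i^i(n-i-1)^{n-2i}(n-1)^i$, realized by the nonhamiltonian graph $K_i+(\overline{K_i}\cup K_{n-2i})$, which is precisely the extremal configuration your maximal-pair argument is circling.
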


Unlike its predecessors, Chv\'atal's theorem has the property that
if it does not guarantee that~$\pi$ is forcibly hamiltonian
because the condition fails for some $i<\frac{n}{2}$, then~$\pi$
is majorized by $\pi'=i^i\,(n-i-1)^{n-2i}\,(n-1)^i$, which has a
nonhamiltonian realization $K_i+(\overline{K_i}\cup K_{n-2i})$. As
we will see below, this implies that Chv\'atal's theorem is the
strongest of an entire class of theorems giving sufficient degree
conditions for~$\pi$ to be forcibly hamiltonian.

A few years later, Boesch~\cite{Boesch74} recast, in the form of
Theorem~\ref{BB} below, an earlier sufficient condition of
Bondy~\cite{Bondy69} for a degree sequence to be forcibly
$k$-connected.  He also showed the condition was strongest in
exactly the same sense as Chv\'{a}tal's forcibly hamiltonian
condition.

\begin{thm}\label{BB}
  \;Let $\pi=(d_1\le\dots\le d_n)$ be a graphical sequence with $n\ge2$,
  and let $1\le k\le n-1$. If $d_i\le i+k-2\,\Longrightarrow\,d_{n-k+1}\ge n-i$,
  for $1\le i\le{\frac12}(n-k+1)$, then~$\pi$ is forcibly $k$-connected.
\end{thm}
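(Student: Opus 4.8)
The plan is to prove the contrapositive: assume $\pi$ has a realization $G$ that is not $k$-connected, and show that the degree condition must fail for some $i$ in the stated range. Since $G$ is not $k$-connected, there is a vertex cut $S$ with $|S|\le k-1$ (or $G$ is complete with too few vertices, a boundary case I would handle separately). Removing $S$ disconnects $G$ into at least two parts; let $A$ be a smallest such part, with $a=|A|$, and let $B$ be the union of the remaining parts. The key structural observation is that no vertex of $A$ is adjacent to any vertex of $B$, so every vertex of $A$ has all its neighbors inside $A\cup S$, giving the degree bound $\deg(v)\le (a-1)+|S|\le a+k-2$ for each $v\in A$.

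Next I would translate this local degree bound into a statement about the \emph{sorted} sequence $\pi$. Because the $a$ vertices of $A$ all have degree at most $a+k-2$, the graph $G$ has at least $a$ vertices of degree $\le a+k-2$; hence in the nondecreasing ordering we have $d_a\le a+k-2$, i.e. the hypothesis of the implication is satisfied at $i=a$. To force a contradiction with the conclusion $d_{n-k+1}\ge n-a$, I would bound the large degrees from above. The idea is that the $k-1-|S|$ ``leftover'' positions and the small part $A$ constrain how many high-degree vertices can exist: specifically, I expect to show that at most $n-(n-a)=a$ vertices... more carefully, that the number of vertices of degree at least $n-a$ is strictly less than $k$, so that $d_{n-k+1}<n-a$, contradicting the conclusion. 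This is where the precise counting using $|S|\le k-1$ and the minimality of $A$ enters.

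To make that counting work I would argue as follows. A vertex $v$ can have degree as large as $n-a$ only if it is adjacent to essentially everything outside $A$; but vertices in $A$ have degree at most $a+k-2\le n-a$ only when $a$ is small, and vertices in $B$ are non-adjacent to all of $A$, so their degree is at most $n-1-a$. Thus every vertex outside $S$ has degree at most $n-1-a<n-a$, meaning only the at most $|S|\le k-1$ vertices of the cut can possibly have degree $\ge n-a$. Consequently fewer than $k$ vertices have degree $\ge n-a$, so $d_{n-k+1}\le n-1-a<n-a$, which is exactly the negation of the conclusion at $i=a$. It remains only to check that $a$ lies in the admissible range $1\le a\le\frac12(n-k+1)$: since $A$ is the smallest part and $B$ together with the possibility of further parts accounts for at least $a$ vertices plus the cut $S$, we have $2a+|S|\le n$, hence $a\le\frac12(n-|S|)\le\frac12(n-k+1)$ once we use $|S|\le k-1$ together with the connectivity assumption $|S|=k-1$ in the critical case.

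The main obstacle I anticipate is handling the \emph{boundary and edge cases} cleanly rather than the core inequality, which is a direct degree count. Specifically, I would need to treat (i) the case where $G$ fails to be $k$-connected because $n\le k$ and $G$ is complete, where no separating set exists, and (ii) ensuring the cut can be taken of size exactly $k-1$ so the range bound $a\le\frac12(n-k+1)$ is tight. Relatedly, I must be careful that the inequality $\deg(v)\le n-1-a$ for $v\in B$ really does strictly beat $n-a$, and that the minimality choice of $A$ (rather than just any part) is what delivers $2a+|S|\le n$; choosing the larger part or an arbitrary part would break the range restriction. Assembling these edge cases so that every realization is covered, and confirming the Boesch/Bondy sharpness claim via the extremal sequence $i^{\,i}(n-i-1)^{\,n-2i}(n-1)^{\,i}$-type construction, is the part I expect to require the most care.
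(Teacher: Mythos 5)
Your attempt cannot be compared against an in-paper proof, because the paper states Theorem~\ref{BB} without one: it is quoted as background from Boesch~\cite{Boesch74}, recasting Bondy~\cite{Bondy69}. Measured against the standard argument, your skeleton is the right one and most of it is sound: with a separating set $S$ and a smallest part $A$ of $G-S$, $|A|=a$, every vertex of $A$ has degree at most $a-1+|S|$, giving $d_a\le a+k-2$; every vertex of $B=V(G)\setminus(S\cup A)$ has degree at most $|B|-1+|S|=n-a-1$; so at most $|S|$ vertices can have degree at least $n-a$, forcing $d_{n-k+1}\le n-a-1<n-a$ and violating the implication at $i=a$. The complete-graph worry is vacuous here, since $k\le n-1$ makes $K_n$ $k$-connected, so any non-$k$-connected realization has a cut set.

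The one genuine gap is the step you flag but do not carry out, and as written it fails: from $2a+|S|\le n$ and $|S|\le k-1$ you wrote $a\le\frac12(n-|S|)\le\frac12(n-k+1)$, but the second inequality points the wrong way, since $|S|\le k-1$ gives $n-|S|\ge n-k+1$. The failure is not cosmetic: if $|S|<k-1$ (e.g., $G$ disconnected, $S=\emptyset$, two parts of size $n/2$), then $a$ can exceed $\frac12(n-k+1)$, so $i=a$ is outside the admissible range, and simultaneously the bound $a+k-2\le n-a-1$ you need to keep $A$-vertices out of the high-degree count breaks down. The repair is to prove that $S$ may be taken of size \emph{exactly} $k-1$: take a minimum cut $S_0$ (which exists, $|S_0|\le k-1$), fix vertices $u,w$ in different components of $G-S_0$, and enlarge $S_0$ by any $k-1-|S_0|$ vertices other than $u,w$; this is possible because $k-1\le n-2$, and any $u$--$w$ path in $G-S$ would survive in $G-S_0$, so $S$ still separates. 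With $|S|=k-1$ and $A$ a smallest part of $G-S$, minimality gives $2a\le n-(k-1)$, hence $a\le\frac12(n-k+1)$ and $a+k-2\le n-a-1$, and your count closes, including the equality case $2a=n-k+1$. With that lemma inserted, your proof is correct; the sharpness discussion you append is not needed for the stated theorem, which claims only sufficiency.
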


An analogous such theorem for 2-edge connected was given by Bauer et al. in~\cite{BHKS09}.

\begin{thm}\label{BM2edge}
  Let $\pi = (d_1\le\cdots\le d_n)$ be a graphical sequence. If
  \begin{enumerate}
     \item $d_1\geq 2$;
      \item $d_i-1\leq i-1\;\wedge\;d_i\leq i\; \Rightarrow\; d_{n-1}\geq n-i\;
      \vee\; d_n\geq n-i+1$, for $3\leq i<\frac12n$; and
      \item $d_{n/2}\leq\frac12n-1\; \Rightarrow\; d_{n-2}\geq\frac12n\; \vee\; d_n\geq\frac12n+1$, if~$n$ is even,
  \end{enumerate}
  then~$\pi$ is forcibly $2$-edge-connected.
\end{thm}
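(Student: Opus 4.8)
The plan is to proceed by contradiction, in the same spirit as the proofs of Theorems~\ref{thm:chvatal} and~\ref{BB}: I would extract the antecedents of conditions (2) and (3) from the existence of a small edge cut, and then use the opposite side of that cut to contradict the corresponding consequents. So suppose $\pi$ satisfies (1)--(3) but has a realization $G$ that is not $2$-edge-connected. Condition (1) gives $\delta(G)=d_1\ge 2$, so $G$ has no isolated or pendant vertices; hence its edge connectivity is either $0$ (if $G$ is disconnected) or exactly $1$ (if $G$ is connected with a bridge). In either case there is a partition $V(G)=S\cup\bar S$ with at most one edge between the parts, and I may take $s:=|S|\le n/2$ to be the smaller side.

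First I would read the small side. Since at most one crossing edge exists, at most one vertex of $S$ sends an edge to $\bar S$, so at least $s-1$ vertices of $S$ have all their neighbors in $S$ and therefore degree at most $s-1$, while the one remaining vertex has degree at most $s$. Two things follow at once. Because every degree is at least $2$, the $s-1$ interior vertices force $s-1\ge 2$, i.e.\ $s\ge 3$. And since $G$ has at least $s-1$ vertices of degree $\le s-1$ and at least $s$ vertices of degree $\le s$, the nondecreasing ordering yields $d_{s-1}\le s-1$ and $d_s\le s$. Putting $i=s$, these are exactly the two inequalities forming the antecedent of (2) when $s<n/2$, and $d_{n/2}\le\frac{n}{2}-1$ is immediate when $s=n/2$ (then $n$ is even and $s\ge 3$ gives $n\ge 6$, so that $n-2\ge n/2$ vertices have degree $\le\frac{n}{2}-1$). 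In each case the relevant implication must therefore fire.

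The contradiction comes from the large side $\bar S$, which has $n-s$ vertices and is also incident to at most one crossing edge. By the same count, at most one vertex of $\bar S$ reaches degree $n-s$ and the other $n-s-1$ vertices have degree at most $n-s-1$; meanwhile every vertex of $S$ has degree at most $s\le n-s$. When $s<n/2$ we have $s\le n-s-1$, so at most one vertex of $G$ has degree as large as $n-s=n-i$ and every other vertex has degree at most $n-i-1$; thus $d_{n-1}\le n-i-1$ and $d_n\le n-i$, which flatly contradicts the consequent $d_{n-1}\ge n-i\ \vee\ d_n\ge n-i+1$ of (2). When $s=n/2$ the single crossing edge can lift one vertex on each side to degree $n/2$, so at most two vertices attain degree $\frac{n}{2}$ and the other $n-2$ have degree at most $\frac{n}{2}-1$; hence $d_{n-2}\le\frac{n}{2}-1$ and $d_n\le\frac{n}{2}$, contradicting the consequent of (3). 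If $G$ is disconnected there is no crossing edge and every bound above only improves, so the same contradictions hold a fortiori.

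I expect the real work to be bookkeeping at the boundaries rather than any isolated difficulty. One must check that the interval $3\le s\le n/2$ forced by the cut lines up precisely with the index ranges in (2) and (3), that the split $s<n/2$ versus $s=n/2$ routes each case to the correct hypothesis (including odd $n$, where $s<n/2$ always and (3) is vacuous, and small even $n$ where (2) is vacuous and (3) alone carries the $s=n/2$ case), and that the ``at most one'' (resp.\ ``at most two'') high-degree count is tight in both the bridge and the disconnected sub-cases. I would also be careful that \emph{both} inequalities $d_{i-1}\le i-1$ and $d_i\le i$---not merely $d_i\le i$---are established before invoking (2), since it is the conjunction that makes the hypothesis, and hence the theorem, best possible: the sequence realized by $K_i$ joined to $K_{n-i}$ by a single bridge meets the antecedent with equality, fails the consequent, and is not $2$-edge-connected.
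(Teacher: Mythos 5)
Your proof is correct, and it is essentially the argument behind the quoted result: the paper states Theorem~\ref{BM2edge} without proof, citing it from~\cite{BHKS09}, and your contrapositive edge-cut count --- small side of the cut yielding $d_{s-1}\le s-1$ and $d_s\le s$ (with $s\ge 3$ forced by $d_1\ge 2$), large side capping $d_{n-1}$ and $d_n$, and the balanced case $s=n/2$ routed to condition~(3) --- is exactly the standard derivation. You also correctly read the statement's misprint ``$d_i-1\le i-1$'' as the subscripted condition $d_{i-1}\le i-1$, which is what makes the antecedent a genuine conjunction, and your boundary bookkeeping (odd $n$, small even $n$, and the disconnected subcase) is sound.
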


A graph property~$P$ is called \emph{ancestral} if whenever a
graph~$G$ has~$P$, so does every edge-augmented supergraph of~$G$.
In particular, `hamiltonian' and `$k$-connected' are both
ancestral graph properties. In the remainder of this paper, the
term `graph property' will always mean an ancestral graph
property.

Given a graph property~$P$, consider a theorem~$T$ which provides sufficient conditions for
a graphical sequence to be forcibly~$P$. We call such a theorem~$T$ a
\emph{forcibly~$P$ theorem} (or just a \emph{$P$ theorem}). Thus
Theorem~\ref{thm:chvatal} is a forcibly hamiltonian theorem. We
call a~$P$ theorem~$T$ \emph{monotone} if, for any two degree
sequences $\pi,\pi'$, whenever~$T$ declares~$\pi$ forcibly~$P$ and
$\pi'\ge\pi$, then~$T$ declares~$\pi'$ forcibly~$P$. We call a~$P$
theorem~$T$ \emph{optimal} (resp., \emph{weakly optimal}) if
whenever~$T$ does not declare~$\pi$ forcibly~$P$, then~$\pi$ has a
realization without property~$P$ (resp., then there exists~$\pi'$,
so that $\pi'\ge\pi$ and~$\pi'$ has a realization without
property~$P$). Thus, optimal $P$ theorems also provide necessary conditions
for a graphical sequence to be forcibly $P$. In view of the following result~\cite{BMSUR}, a $P$ theorem
which is both monotone and weakly optimal is called a \emph{best
monotone~$P$ theorem}.

\begin{thm}\label{wo}
  \quad Let $T$,~$T_0$ be monotone~$P$ theorems, with~$T_0$ weakly optimal.
  If~$T$ declares a degree sequence~$\pi$ to be forcibly~$P$, then so
  does~$T_0$.
\end{thm}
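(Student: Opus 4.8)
The plan is to argue by contraposition. Rather than showing directly that $T_0$ declares $\pi$ forcibly~$P$ whenever $T$ does, I would assume that $T_0$ fails to declare $\pi$ forcibly~$P$ and deduce that $T$ must fail as well. This is the cleaner direction because the hypothesis I then get to use---that $T_0$ does \emph{not} declare $\pi$ forcibly~$P$---is exactly the trigger clause in the definition of weak optimality.

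First I would invoke the weak optimality of $T_0$. Since $T_0$ does not declare $\pi$ forcibly~$P$, weak optimality produces a degree sequence $\pi'\ge\pi$ possessing a realization $G$ that lacks property~$P$. The essential point to extract is that $\pi'$ is itself \emph{not} forcibly~$P$: because $G$ realizes $\pi'$ but does not have~$P$, not every realization of $\pi'$ has~$P$.

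Next I would use the fact that $T$ is a genuine (correct) $P$ theorem, i.e., every sequence $T$ declares forcibly~$P$ really is forcibly~$P$. Combined with the previous step, this forces $T$ \emph{not} to declare $\pi'$ forcibly~$P$, for otherwise $\pi'$ would be forcibly~$P$, contradicting the existence of the bad realization $G$. Finally, monotonicity of $T$ closes the loop: if $T$ were to declare $\pi$ forcibly~$P$, then from $\pi'\ge\pi$ it would also declare $\pi'$ forcibly~$P$, contradicting what we just established. Hence $T$ does not declare $\pi$ forcibly~$P$, which is precisely the contrapositive of the claim.

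The argument is short, and the only place requiring care is keeping the roles of the two sequences straight. Weak optimality of $T_0$ only guarantees a \emph{majorizing} witness $\pi'$, not $\pi$ itself, so it is the monotonicity of the \emph{other} theorem~$T$ (not of $T_0$) that must bridge from $\pi'$ back to $\pi$. I expect this bookkeeping---matching each hypothesis (correctness of $T$, monotonicity of $T$, weak optimality of $T_0$) to the sequence on which it acts---to be the main, if modest, obstacle.
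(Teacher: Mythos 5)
Your argument is correct and is essentially the standard proof of this result (the paper itself cites~\cite{BMSUR} rather than reproducing it): weak optimality of $T_0$ yields $\pi'\ge\pi$ that is not forcibly~$P$, soundness of~$T$ as a $P$~theorem shows $T$ cannot declare $\pi'$ forcibly~$P$, and monotonicity of~$T$ then blocks $T$ from declaring~$\pi$. Your bookkeeping is right on the one delicate point, namely that it is the monotonicity of~$T$, not of~$T_0$, that bridges from $\pi'$ back to~$\pi$.
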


Theorems~\ref{thm:chvatal}, \ref{BB}, and \ref{BM2edge} are  monotone and weakly optimal.  Thus
they are each best monotone for their respective properties.

In this paper, we continue the discussion of monotone theorems and best monotone theorems for measures of connectivity. In particular, we consider \textit{$k$-component order connectivity} and \textit{$k$-component order edge connectivity}, as defined in \cite{BGS98} and \cite{BGKSS06}, respectively.

Given a graph $G$, the \textit{$k$-component order connectivity (resp. edge connectivity)}, denoted $\kappa_{c}^{(k)}(G)$ (resp. $\lambda_{c}^{(k)}(G)$), is the minimum number of vertices (resp. edges) whose removal results in an induced subgraph for which every component has order at most $k-1$. A graph $G$ is \textit{$k$-component order $s$-connected (resp. $s$-edge connected)} if $\kappa_c^{(k)}(G)\geq s$ (resp. $\lambda_c^{(k)}(G)\geq s$).

We note that a construction of the best monotone theorem for $k$-component order $s$-connected appears in~\cite{Y14}. However, the same results are presented in a much clearer manner in this paper. There is also a connection between those results and the construction of the best monotone theorem for $k$-component order $s$-edge connected.

In the next section, we will describe the method by which we construct best monotone theorems for the properties of $k$-component order $s$-connected, for $s\geq 1$, and $k$-component order $s$-edge connected, for $s=1$ and 2.

\section{Framework for Best Monotone Theorems}\label{s2}

The concepts in this section can also be found in~\cite{BMSUR}.

Consider the partially-ordered set~$G_n$ whose elements are the
graphical sequences of length~$n$, and whose partial-order
relation is degree majorization. The graphical sequences of
length~$n$ that are not forcibly~$P$ induce a subposet of~$G_n$,
denoted~$\overline{P_n}$. A maximal element in~$\overline{P_n}$ is
called a \emph{$(P,n)$-sink}. The set of all $(P,n)$-sinks will be
denoted $S(P,n)$.

A \emph{Chv\'atal-type (degree) condition} on a degree sequence
$d_1\le d_2\le\cdots\le d_n$ is a condition of the form
\[d_{i_1}\ge k_{i_1}\;\vee\;d_{i_2}\ge k_{i_2}\;\vee\;\ldots\;\vee\;
d_{i_r}\ge k_{i_r},\] where each~$i_j$ and~$k_{i_j}$ is an
integer, with $1\le i_1<i_2<\dots<i_r\le n$ and $1\le k_{i_1}\le
k_{i_2}\le \cdots\le k_{i_r}\le n$.  Given an $n$-sequence
$\pi=(k_1\leq k_2 \leq\cdots\leq k_n)$, let $C(\pi)$ denote the
Chv\'{a}tal-type condition $$d_1\geq k_1+1\, \lor\, d_2\geq k_2
+1\, \lor \cdots \lor\, d_n\geq k_n+1.$$ Of course, $C(\pi)$ is
the weakest Chv\'{a}tal-type condition which blocks $\pi$ (i.e.,
so that $\pi$ fails to satisfy $C(\pi)$).  Also, note that if
$k_i=k_j$ for some $i<j$, then the conditions $d_i\geq k_i +1\,
\lor \cdots \lor\, d_j\geq k_j +1$ in $C(\pi)$ can be replaced by
the single condition $d_j\geq k_j +1$.  Moreover, since $d_i\geq
n$ is impossible in a graphical $n$-sequence, a condition $d_i\geq
n$ in $C(\pi)$ is redundant.  We will always assume $C(\pi)$ has
been simplified in these two ways and, in addition, we will
usually write~$C(\pi)$ in the more traditional form
\[d_1\le k_1\;\wedge\;\cdots\;\wedge\;d_{j-1}\le k_{j-1}\;\Rightarrow\;
d_j\ge k_j+1\;\vee\;\cdots\;\vee\;d_n\ge k_n +1,\] for some $j<n$.

\textbf{Example:} The graphical $6$-sequence $\pi=(2,2,3,3,3,5)$
is blocked by the simplified Chv\'atal-type condition $d_2\geq 3\, \lor\, d_5\geq 4$, or more
traditionally as $d_2\leq 2 \Rightarrow d_5\geq
4$.\exend

As the name implies, the terminology `Chv\'atal-type condition' is inspired by the conditions of Theorem~\ref{thm:chvatal}, i.e., $d_i\le i<{\frac{n}{2}}\,\Longrightarrow\,d_{n-i}\ge n-i$. These conditions block the sinks $\pi_i=i^{i} (n-i-1)^{(n-2i)} (n-1)^i$ for $i<\frac{n}{2}$, which are the degree sequences of the edge-maximal nonhamiltonian graphs $K_i+(\overline{K_i}\cup K_{n-2i})$.

If $\pi\in\overline{P_n}$, then by definition there exists
$\pi'\in S(P,n)$ majorizing~$\pi$, and thus~$\pi$ fails to
satisfy~$C(\pi')$. Put differently, if a graphical $n$-sequence
$\pi$ satisfies the degree condition $\bigwedge_{\pi\in
S(P,n)}C(\pi)$, then~$\pi$ is forcibly~$P$; i.e., the theorem~$T$
with degree condition $\bigwedge_{\pi\in S(P,n)}C(\pi)$ is a
forcibly $P$-theorem. Certainly~$T$ is monotone, and~$T$ is
also weakly-optimal (if~$\pi$ fails to satisfy the degree
condition of~$T$, then~$\pi$ is majorized by some $\pi'\in
S(P,n)\subseteq\overline{P_n}$ which is not forcibly~$P$).
Thus~$T$ is a best monotone $P$-theorem.

Therefore, if we can identify the precise set of sinks $S(P,n)$, then the theorem with degree condition
$\bigwedge_{\pi\in\prod(S(P,n))}C(\pi)$ will be the best monotone $P$-theorem.

Finally, we note that $|S(P,n)|$ may be considered the `inherent
complexity' of a best monotone theorem on~$n$ vertices. More
precisely, we have the following~\cite{BMSUR}.

\begin{thm}\label{thm:22}
  When the degree condition of a best monotone $P$-theorem on~$n$ vertices
  is expressed as a conjunction $\bigwedge C(\pi)$ of $P$-weakly-optimal
  Chv\'{a}tal-type conditions, the conjunction must contain at least
  $|S(P,n)|$ such conditions.
\end{thm}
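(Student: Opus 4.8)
The plan is to build an injection from the set of sinks $S(P,n)$ into the collection of conditions occurring in the conjunction; since an injection from a set of size $|S(P,n)|$ into an $m$-element set forces $m\ge|S(P,n)|$, this yields exactly the desired bound. I would work throughout with the two framework facts recorded above: that $\overline{P_n}$ is a down-set of $G_n$ whose maximal elements are precisely the sinks in $S(P,n)$, and that a best monotone $P$-theorem declares a graphical $n$-sequence forcibly $P$ if and only if that sequence does not lie in $\overline{P_n}$. Writing the given degree condition as $\bigwedge_{j=1}^{m}C(\pi_j)$ and letting $B_j=\{\pi\in G_n:\pi\text{ fails }C(\pi_j)\}$, the second fact says that a graphical sequence fails the whole conjunction exactly when it lies in $\overline{P_n}$; equivalently $\bigcup_{j=1}^{m}B_j=\overline{P_n}$.

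Next I would determine the shape of each $B_j$. Since $C(\pi_j)$ is, by construction, the weakest Chv\'atal-type condition blocking $\pi_j$, the sequences failing it are exactly those majorized by $\pi_j$, so $B_j=\{\pi\in G_n:\pi\le\pi_j\}$ is a principal down-set in the graphical poset. The crucial use of weak optimality is to guarantee that the top of this down-set is an honest degree sequence: a $P$-weakly-optimal condition $C(\pi_j)$ has its maximal failing sequence $\pi_j$ graphical, whence $\pi_j\in B_j\subseteq\bigcup_i B_i=\overline{P_n}$. Thus each $B_j$ is the down-set of a single element $\pi_j$ of $\overline{P_n}$, and in particular $\pi_j$ is its unique maximal element.

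With this in hand the injection is immediate. Each sink $\sigma\in S(P,n)$ lies in $\overline{P_n}=\bigcup_j B_j$, so $\sigma\le\pi_{j}$ for some index $j$; choose one such index and call it $f(\sigma)$. Because $\pi_{f(\sigma)}\in\overline{P_n}$ while $\sigma$ is a maximal element of $\overline{P_n}$, the relation $\sigma\le\pi_{f(\sigma)}$ can only hold with equality, i.e.\ $\sigma=\pi_{f(\sigma)}$. Hence $\sigma$ is recovered from $f(\sigma)$, so $f$ is injective and $m\ge|S(P,n)|$. Since the canonical condition $\bigwedge_{\sigma\in S(P,n)}C(\sigma)$ attains this count, the bound is sharp and the canonical best monotone theorem is as economical as possible, matching the statement.

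The step I expect to be the main obstacle is the anchoring claim in the second paragraph: that weak optimality forces the maximal failing sequence $\pi_j$ of each condition to be graphical, so that $B_j$ has a \emph{unique} maximal element lying in $\overline{P_n}$. This is essential, because a Chv\'atal-type condition whose maximal failing sequence is not graphical can have several maximal \emph{graphical} failing sequences, and could then be majorized simultaneously by two incomparable sinks---which would collapse two sinks onto one condition and destroy injectivity. I would therefore pin down precisely, following the framework of \cite{BMSUR}, the sense in which a $P$-weakly-optimal condition is tied to a genuine non-forcibly-$P$ degree sequence, and verify that the two simplifications applied to $C(\pi)$ (deleting redundant disjuncts and dropping $d_i\ge n$) do not enlarge the set of graphical sequences that fail it. Once this anchoring is secured, the maximality argument and the counting are routine.
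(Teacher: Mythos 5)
The paper itself offers no proof of this theorem; it is quoted from the survey \cite{BMSUR}, so your attempt must be judged against the intended argument there. Your architecture is the right one: every sink lies in $\overline{P_n}$, hence fails the conjunction (the theorem is valid), hence fails some conjunct; the fail-set of a Chv\'atal-type condition is a principal down-set of nondecreasing sequences with a \emph{unique} maximal element; and one then argues at most one sink per conjunct. But two of your ``framework facts'' are not facts. First, $\bigcup_j B_j=\overline{P_n}$ is false: best monotone means monotone \emph{plus weakly optimal}, not optimal, so a graphical sequence failing the conjunction need not itself lie in $\overline{P_n}$ --- it is only guaranteed to be \emph{majorized} by an element of $\overline{P_n}$. (In Chv\'atal's theorem, a sequence failing the condition at some $i$ is majorized by the sink $i^i(n-i-1)^{n-2i}(n-1)^i$ but may itself be forcibly hamiltonian.) Only $\overline{P_n}\subseteq\bigcup_j B_j$ holds, which is all you need to define $f$. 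Second, and fatally, the anchoring claim you yourself flag --- that weak optimality forces the maximal failing sequence $\pi_j$ of each conjunct to be graphical --- is not merely unverified but false: if $\pi$ is a nongraphical sequence majorized by a sink, then $C(\pi)$ is weakly optimal (everything failing it is majorized by that sink), yet its unique maximal failing sequence is nongraphical. Consequently your injection, which concludes $\sigma=\pi_{f(\sigma)}$, cannot go through as written; a best monotone theorem may even contain redundant weakly optimal conjuncts whose maximal failing sequence is not a sink, and your argument would ``prove'' otherwise.

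The repair is to read weak optimality of a single condition as applying to \emph{every} sequence blocked by it, graphical or not (this is how it functions in \cite{BMSUR}), and to route the injection through sinks rather than through the maximal failing sequence. Apply weak optimality to the unique maximal failing sequence $\mu_j$ of the $j$-th conjunct: it yields a graphical sequence with a realization lacking $P$ that majorizes $\mu_j$, hence a sink $\tau_j\ge\mu_j$. If two distinct sinks $\sigma_1,\sigma_2$ fail the same conjunct, then $\sigma_1,\sigma_2\le\mu_j\le\tau_j$, and maximality of sinks in $\overline{P_n}$ forces $\sigma_1=\tau_j=\sigma_2$, a contradiction; so each conjunct blocks at most one sink and $m\ge|S(P,n)|$. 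Note that graphicality of $\mu_j$ is neither available nor needed, and your instinct about the danger was sound: the entrywise join of graphical sequences need not be graphical (e.g., $(1,1,1,1)$ and $(0,1,1,2)$ have join $(1,1,1,2)$, of odd sum), which is exactly why the quantification over nongraphical sequences in the definition of a weakly optimal condition is what makes the counting work. Your closing sharpness remark, that $\bigwedge_{\sigma\in S(P,n)}C(\sigma)$ attains the bound, is correct and matches the construction in Section~\ref{s2}.
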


As an example, Theorem~\ref{thm:chvatal} has an inherent complexity of $\floor{n-1}{2}$, since the sinks are $\pi_i=i^{i} (n-i-1)^{(n-2i)} (n-1)^i$ for $i<\frac{n}{2}$. It is worth noting that, in this example, the inherent complexity is linear in $n$. The inherent complexity is relevant because, based on how the number of conditions grows with respect to the parameters of the property, we can determine if the corresponding best monotone theorem is computationally feasible. Thus, when it may be of interest, we explore the inherent complexity of a best monotone theorem.

\section{Main Results}

In what follows, if $p>q$, then $\ds{\sum_{i=p}^{q} a_i}=0$.

We begin with the following simple monotone theorems, the first of which appeared in~\cite{Y14}.

\begin{thm}\label{simp1}
Let $k$, $n$, and $s$ be integers with $2\leq k\leq n$ and $1\leq
s\leq n-k+1$. Let $\pi=(d_{1}\leq \dots \leq d_{n})$ be a
graphical sequence. If $d_{n-s+1}\geq k+s-2$,
 then $\pi$ is forcibly $k$-component order $s$-connected.
 \end{thm}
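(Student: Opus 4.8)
The plan is to argue by contradiction directly from the definition of $\kappa_c^{(k)}$, exploiting the fact that the hypothesis $d_{n-s+1}\ge k+s-2$ guarantees a pool of $s$ vertices of sufficiently large degree, and then using a pigeonhole argument to show that no removal of at most $s-1$ vertices can destroy all large components.

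First I would suppose that some realization $G$ of $\pi$ is not $k$-component order $s$-connected, so that $\kappa_c^{(k)}(G)\le s-1$. By definition there is a set $S\subseteq V(G)$ with $\order{S}\le s-1$ whose removal leaves every component of $G-S$ of order at most $k-1$. Because the degrees are listed in nondecreasing order, the hypothesis $d_{n-s+1}\ge k+s-2$ forces $d_j\ge k+s-2$ for every index $j$ with $n-s+1\le j\le n$; this is a set of $s$ vertices, each of degree at least $k+s-2$ in $G$.

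Next I would apply pigeonhole: since $\order{S}\le s-1<s$, at least one of these $s$ high-degree vertices, call it $v$, does not lie in $S$ and hence survives in $G-S$. Deleting $S$ removes at most $\order{S}\le s-1$ of the neighbors of $v$, so the degree of $v$ in $G-S$ is at least $(k+s-2)-(s-1)=k-1$. Consequently the component of $G-S$ containing $v$ consists of $v$ together with its at least $k-1$ surviving neighbors, a component of order at least $k$. This contradicts the choice of $S$, and so no such $S$ exists; hence every realization $G$ satisfies $\kappa_c^{(k)}(G)\ge s$, i.e., $\pi$ is forcibly $k$-component order $s$-connected.

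I expect no serious obstacle here: the argument is elementary and uniform across the allowed ranges $2\le k\le n$ and $1\le s\le n-k+1$. The only point requiring mild care is the degree-loss bookkeeping, namely confirming that a surviving high-degree vertex retains at least $k-1$ neighbors after deleting up to $s-1$ vertices; but since the arithmetic $(k+s-2)-(s-1)=k-1$ is exact, this step is routine rather than delicate.
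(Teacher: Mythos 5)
Your proof is correct, and it is essentially the paper's argument viewed from the other direction: the paper bounds the degree of every vertex surviving the removal of $X$ by $|H|+|X|-1\le k+s-3$ and concludes $d_{n-s+1}\le k+s-3$ by contraposition, while you run the contrapositive as a contradiction, picking one of the $s$ top-degree vertices that survives and noting it keeps at least $(k+s-2)-(s-1)=k-1$ neighbors, forcing a component of order at least $k$. The two arguments use the identical pigeonhole and the same arithmetic, so there is nothing to flag.
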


\begin{proof} Assume $\pi$ has a realization $G$ that is not $k$-component order $s$-connected, i.e., $\kappa_c^{(k)}(G)\leq s-1$.  Then
there exists $X\subseteq V(G)$ such that $|X|\leq s-1$ and
$\langle G-X\rangle$ has a largest component $H$ with $|H|\leq
k-1$.  Thus,
$$d_{n-s+1}\leq d_{n-|X|}\leq |H|+|X|-1\leq k+s-3,$$ i.e.,
$d_{n-s+1}\leq k+s-3$. This completes the proof by contraposition. \end{proof}

This theorem is best monotone when $k=2$. In this case, the condition is $d_{n-s+1}\geq s$.  To see this is weakly-optimal, assume a graphical sequence $\pi$ has $d_{n-s+1}\leq s-1$. Then $\pi\leq\pi'=(s-1)^{n-s+1}(n-1)^{s-1}$, where $\pi'$ has a realization $G'=K_{s-1}+(n-s+1)K_1$ with $\kappa_c^{(2)}(G')=s-1$.

\medskip

\begin{thm}\label{simp2}
Let $k$, $n$, and $s$ be integers with $n\geq k\geq 2$ and
$1\leq s\leq (n+1)/2$.  Let $\pi=(d_{1}\leq \dots \leq d_{n})$. If
$$d_{n-2s+2}\leq k-2\Rightarrow d_n\geq k+s-2,$$
 then $\pi$ is forcibly $k$-component order $s$-edge connected.
\end{thm}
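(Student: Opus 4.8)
The plan is to argue by contraposition, exactly in the spirit of the proof of Theorem~\ref{simp1}. Suppose $\pi$ has a realization $G$ that is not $k$-component order $s$-edge connected, so that $\lambda_c^{(k)}(G)\le s-1$. Then there is an edge set $F\subseteq E(G)$ with $|F|\le s-1$ such that every component of $G-F$ has order at most $k-1$. The goal is to deduce that the hypothesis of the theorem fails, i.e., that simultaneously $d_{n-2s+2}\le k-2$ and $d_n\le k+s-3$.

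The key observation is a degree decomposition. For each vertex $v$, its degree in $G$ equals its degree inside its own component of $G-F$ plus the number of edges of $F$ incident to $v$, since every edge surviving in $G-F$ joins $v$ to a vertex in the same component. As that component has order at most $k-1$, the within-component degree is at most $k-2$; writing $\deg_F(v)$ for the number of edges of $F$ at $v$, this gives $d_G(v)\le (k-2)+\deg_F(v)$ for every $v$.

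Applying this to a vertex of maximum degree and using $\deg_F(v)\le |F|\le s-1$ yields $d_n\le (k-2)+(s-1)=k+s-3$, the second half of the negated hypothesis. For the first half I would count high-degree vertices: the displayed inequality shows that any vertex with $d_G(v)\ge k-1$ must satisfy $\deg_F(v)\ge 1$, i.e., must be an endpoint of some edge of $F$. Since $F$ has at most $s-1$ edges, it has at most $2(s-1)=2s-2$ endpoints, so at most $2s-2$ vertices can have degree at least $k-1$. Hence at least $n-(2s-2)=n-2s+2$ vertices have degree at most $k-2$, and in nondecreasing order this is precisely $d_{n-2s+2}\le k-2$. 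The hypothesis $s\le (n+1)/2$ is exactly what guarantees $n-2s+2\ge 1$, so this index is meaningful. Together these two inequalities show that $d_{n-2s+2}\le k-2$ holds while $d_n\ge k+s-2$ fails, so the implication in the statement is violated, which completes the contrapositive.

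The individual steps are short, and I do not expect a serious obstacle. The one point requiring care is the counting argument, where both conclusions must be extracted from the single structure $(F,\,G-F)$: the maximum-degree bound controls $d_n$, while the incidence count $2|F|\le 2s-2$ controls how many vertices can exceed degree $k-2$, and one must correctly translate this latter bound into the indexed inequality $d_{n-2s+2}\le k-2$.
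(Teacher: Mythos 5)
Your proposal is correct and follows essentially the same contrapositive argument as the paper: the bound $d_n\le (k-2)+|F|\le k+s-3$ for the maximum degree is identical, and your count of at most $2|F|\le 2s-2$ endpoints of $F$ is just the complementary phrasing of the paper's count of at least $n-2|F|\ge n-2s+2$ vertices not incident with any edge of $F$, each of degree at most $k-2$, yielding $d_{n-2s+2}\le k-2$. No gaps; the argument is sound as written.
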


\begin{proof}

Assume $\pi$ has a realization $G$ that is not $k$-component order $s$-edge connected, i.e., $\lambda_c^{(k)}(G)\leq s-1$. Then there is a set $F\subseteq E(G)$ such that $|F|\leq s-1$ and $G-F=H_1\cup H_2\cup \cdots\cup H_{\omega}$ with $|H_1|\leq |H_2|\leq \cdots \leq |H_{\omega}|\leq k-1$. Note that the number of vertices of $G$ that are not incident with some edge in $F$ is at least $n-2|F|\geq n-2s+2$. Thus, $d_{n-2s+2}\leq d_{n-2|F|}\leq k-2$. We also have $d_n\leq |H_{\omega}|-1+|F| \leq (k-1)-1+(s-1)=k+s-3$. Since $d_{n-2s+2}\leq k-2$ and $d_n\leq k+s-3$, this completes the proof by contraposition.
\end{proof}

We now proceed with the construction of the best monotone theorems, beginning with the property of $k$-component order $s$-connected. The following observation allows us to identify the structure of the sinks in this case.

\begin{obs}\label{obs1}
Let $G$ be a graph on $n\geq k+s-2$ vertices that is edge-maximal with respect to $\kappa_c^{(k)}(G)\leq s-1$, i.e., $\kappa_c^{(k)}(G+e)\geq s$ for any edge $e\notin E(G)$. Then there exists $X\subset V(G)$ such that $\langle X\rangle = K_{s-1}$ and $G-X$ consists of $\omega\geq 2$ many disjoint cliques each of order at most $k-1$. Let $G=K_{s-1}+\ds{\bigcup_{j=1}^{\omega}} K_{h_j}$, where $\ds{\sum_{j=1}^{\omega}} h_j=n-s+1$ and $1\leq h_1\leq h_2\leq\cdots\leq h_{\omega}\leq k-1$. Since $G$ is edge-maximal with respect to $\kappa_c^{(k)}(G)\leq s-1$, it must be that $h_1+h_2\geq k$. Otherwise, we could add all possible edges between vertices in $K_{h_1}$ and $K_{h_2}$ to form a component $K_{h_1+h_2}$. Also note that $$n-s+1=\ds{\sum_{j=1}^{\omega}} h_j\leq (k-1)\omega\;\Rightarrow\; \omega\geq \frac{n-s+1}{k-1}.$$
\end{obs}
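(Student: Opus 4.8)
The plan is to begin with a separating set witnessing the hypothesis and then exploit edge-maximality one non-edge at a time to force the claimed clique-and-join structure. Since $\kappa_c^{(k)}(G)\leq s-1$, I would fix a set $X\subseteq V(G)$ of minimum cardinality $t:=\kappa_c^{(k)}(G)\leq s-1$ for which every component of $\langle G-X\rangle$ has order at most $k-1$; call these components $H_1,\dots,H_\omega$. The recurring tool is the following observation: if $e\notin E(G)$ is a non-edge whose addition leaves the partition of $\langle G-X\rangle$ into components unchanged (so each part still has order at most $k-1$), then $X$ still separates $G+e$ appropriately and $\kappa_c^{(k)}(G+e)\leq t\leq s-1<s$, contradicting edge-maximality; hence every such $e$ must already lie in $E(G)$.

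Applying this principle to the three relevant types of non-edge yields the structure. A non-edge with both ends inside a single $H_j$ neither merges nor enlarges any component, so each $H_j$ is complete, i.e.\ $H_j=K_{h_j}$. A non-edge with both ends in $X$, or with one end in $X$ and one end outside $X$, leaves $\langle G-X\rangle$ literally unchanged (the affected edge is incident to a deleted vertex), so $\langle X\rangle=K_t$ and every vertex of $X$ is adjacent to every vertex of $V(G)\setminus X$. Since there are no edges between distinct components of $G-X$, this gives exactly $G=K_t+\bigcup_{j=1}^\omega K_{h_j}$ with $\sum_{j=1}^\omega h_j=n-t$ and $1\leq h_j\leq k-1$.

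The step I expect to be the main obstacle is pinning down $t=s-1$ and $\omega\geq 2$, since here edge-maximality must be used with a compensating vertex rather than directly. Supposing $\omega\geq 2$, I would pick a non-edge $e=xy$ joining two distinct cliques; in $G+e$ the set $X\cup\{x\}$ destroys $e$ and leaves only cliques of order at most $k-1$, so $\kappa_c^{(k)}(G+e)\leq t+1$. If $t$ were at most $s-2$ this would be at most $s-1$, again contradicting edge-maximality, so $t=s-1$. It then remains to rule out $\omega=1$: in that case $G=K_t+K_{h_1}=K_n$, whence $n=t+h_1\leq(s-1)+(k-1)=k+s-2$, the degenerate complete-graph situation forced only at the boundary $n=k+s-2$; for $n>k+s-2$ we must have $\omega\geq 2$.

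With $t=s-1$ in hand, $\sum_{j=1}^\omega h_j=n-s+1$ follows, and the final two assertions are precisely those already argued in the statement: a single edge between $K_{h_1}$ and $K_{h_2}$ would unite them into one component of order $h_1+h_2$, so edge-maximality forces $h_1+h_2\geq k$, while $\omega\geq(n-s+1)/(k-1)$ is immediate from $n-s+1=\sum_{j=1}^\omega h_j\leq(k-1)\omega$.
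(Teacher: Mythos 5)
Your proof is correct and follows essentially the same edge-maximality argument the paper embeds in the observation itself: non-edges inside a component, inside $X$, or between $X$ and the rest can be added without raising $\kappa_c^{(k)}$ above $s-1$, forcing $G=K_{s-1}+\bigcup_{j}K_{h_j}$ (your $X\cup\{x\}$ trick to pin down $|X|=s-1$ is the right way to make this rigorous), and the component-merging argument gives $h_1+h_2\geq k$ exactly as in the paper. Your care at the boundary is in fact a small correction to the stated observation: when $n=k+s-2$ the complete graph $K_n=K_{s-1}+K_{k-1}$ is (vacuously) edge-maximal with $\omega=1$, so the claim ``$\omega\geq 2$'' holds only for $n>k+s-2$, precisely as you note.
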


If $P$ is ``$k$-component order $s$-connected'', the following lemma implies that in order for $\pi(G)$ to be a maximal element of $\overline{P}_n$, we must have $\omega=\ceil{n-s+1}{k-1}$.

\medskip

\begin{lem}\label{part}
Let $k$ and $n$ be integers such that $n\geq k\geq 2$. Consider a partition $n=a_1+a_2+\cdots +a_{\ell}$ with $1\leq a_1\leq a_2\leq\cdots\leq a_{\ell}\leq k-1$, $a_1+a_2\geq k$, and $\ell>\ceil{n}{k-1}\geq 2$. There exists a partition $n=c_1+c_2+\cdots +c_{\omega}$ with $1\leq c_1\leq c_2\leq\cdots\leq c_{\omega}\leq k-1$ and $\omega=\ceil{n}{k-1}$ such that $\pi'=(c_1-1)^{c_1}(c_2-1)^{c_2}\cdots (c_{\omega}-1)^{c_{\omega}}$ majorizes $\pi=(a_1-1)^{a_1}(a_2-1)^{a_2}\cdots (a_{\ell}-1)^{a_{\ell}}$.
\end{lem}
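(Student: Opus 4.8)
The plan is to realize every admissible partition as a disjoint union of cliques and then reduce the number of cliques one at a time, each time enlarging the surviving cliques so that the associated degree sequence can only move up in the majorization order. Throughout, identify a partition $n=b_1+\cdots+b_m$ with parts in $[1,k-1]$ with the graph $\bigcup_i K_{b_i}$, whose sorted degree sequence is $D(b)=(b_1-1)^{b_1}\cdots(b_m-1)^{b_m}$, so that $\pi=D(a)$ and the sequence $\pi'$ we must produce is $D(c)$ for a suitable partition $c$. The first step is to record a convenient reformulation of majorization for such sequences. For an integer $t$, a vertex of $\bigcup_i K_{b_i}$ has degree less than $t$ precisely when it lies in a clique of order at most $t$, so the number of entries of $D(b)$ below $t$ equals $g_b(t):=\sum_{b_i\le t} b_i$. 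Since two nondecreasing sequences of equal length $n$ satisfy $\pi'\ge\pi$ if and only if, for every $t$, $\pi'$ has no more entries below $t$ than $\pi$ does, the target inequality $D(c)\ge D(a)$ is equivalent to
\[
\sum_{c_i\le t} c_i \ \le\ \sum_{a_j\le t} a_j \qquad\text{for every integer } t.
\]

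Next I would define the reduction step. Given a partition with $m>\omega:=\ceil{n}{k-1}$ parts $b_1\le\cdots\le b_m$ in $[1,k-1]$, delete the smallest clique $K_{b_1}$ and pour its $b_1$ vertices into the remaining cliques, filling each up to order at most $k-1$. This is feasible exactly when the total spare capacity of the other cliques is at least $b_1$, i.e. $\sum_{i\ge2}(k-1-b_i)\ge b_1$, which rearranges to $(m-1)(k-1)\ge n$; and this holds because $m\ge\omega+1$ gives $(m-1)(k-1)\ge\omega(k-1)\ge n$, the last inequality being just $\omega\ge n/(k-1)$. The outcome is a partition of $n$ into $m-1$ parts, still lying in $[1,k-1]$. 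Since every surviving clique only grows and the deleted clique was the smallest, for each threshold $t$ the vertices lying in cliques of order $>t$ can only increase in number, so $\sum_{b_i'>t} b_i'\ \ge\ \sum_{b_i>t} b_i$; taking complements in $n$ yields $g_{b'}(t)\le g_b(t)$ for all $t$, which by the reformulation above means the new degree sequence majorizes the old one.

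Finally I would iterate this step. Starting from $(a_1,\ldots,a_\ell)$ and applying the reduction $\ell-\omega$ times produces a partition $(c_1\le\cdots\le c_\omega)$ of $n$ into exactly $\omega=\ceil{n}{k-1}$ parts in $[1,k-1]$, and composing the majorizations gives $D(c)\ge D(a)=\pi$, as required. I expect the feasibility of the reduction step to be the main obstacle: one must guarantee at each stage that the remaining cliques have enough spare capacity to absorb the smallest clique, and this is precisely where the hypothesis $\ell>\ceil{n}{k-1}$ is used, since the capacity inequality $(m-1)(k-1)\ge n$ is available exactly as long as the current part count $m$ exceeds $\omega$. I would also remark that this argument never invokes the hypothesis $a_1+a_2\ge k$, so that condition, while natural from Observation~\ref{obs1}, is not needed for the conclusion.
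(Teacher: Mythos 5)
Your proof is correct, and it takes a genuinely different route from the paper's. The paper argues by a one-shot explicit construction: it keeps a block of middle parts $a_{\ell-\omega+1},\dots,a_{\ell-M-2}$, forms a single remainder part $c_{\omega-M-1}=t+\sum_{s=\ell-M-1}^{\ell}a_s-(M+1)(k-1)$ via a carefully chosen integer $M$ (whose bounds $0\le M\le\omega-2$ use the hypothesis $a_1+a_2\ge k$), pads with parts equal to $k-1$, and then verifies $d_i'\ge d_i$ index by index through a chain of inequalities. You instead characterize entrywise domination of equal-length sorted sequences by threshold counts, note that for a clique-union sequence the number of entries below $t$ is $\sum_{b_i\le t}b_i$, and repeatedly absorb the smallest part into the spare capacity of the remaining parts; the feasibility condition $(m-1)(k-1)\ge n$ holds precisely while the part count $m$ exceeds $\omega=\ceil{n}{k-1}$, and monotonicity at every threshold is immediate because the absorbed part is the smallest and all surviving parts only grow (deleting a non-smallest part would break this, so your emphasis there is the right one). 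Your argument is more modular, avoids the index bookkeeping, and correctly shows that the hypothesis $a_1+a_2\ge k$ is not needed for the lemma as stated; it is worth noting that your reduction preserves that inequality anyway, since after a step the two smallest parts are at least the old $a_2$ and $a_3$, so their sum is at least $a_1+a_2\ge k$ --- a point that matters downstream, because the paper's construction deliberately maintains $c_1+c_2\ge k$ so that the output again corresponds to an edge-maximal graph in the sense of Observation~\ref{obs1} and feeds into Theorem~\ref{sinks}. What the paper's proof buys in exchange for its heavier computation is an explicit closed-form target partition (the one used in the worked example following the lemma), whereas your iterative scheme guarantees existence with the endpoint depending on how the poured vertices are distributed.
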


\begin{proof}
If $k-1 \mid n$, then let $c_1=c_2=\cdots =c_{\omega}=k-1$, and we are done. Next, assume $k-1 \nmid n$, so that $n=q(k-1)+r$ for $q=\floor{n}{k-1}$ and $1\leq r\leq k-2$. Note that $\omega=\ceil{n}{k-1}=q+1$. By assumption, $\ell>\omega=q+1$, so that $\ell\geq q+2\geq 3$.

Create a new partition $n=c_1+\cdots+c_{\omega}$ in the following way. Define $t:=\ds{\sum_{s=1}^{\ell-\omega}}a_s$. Let $M$ be the largest integer value of $m$ such that $(m+1)(k-1)\leq t+\ds{\sum_{s=\ell-m}^{\ell}a_s}$. Note that

\begin{eqnarray}\label{Mbound}(\ell-1)(k-1)-\ds{\sum_{s=\ell-\omega+1}^{\ell}a_s} &\geq & (q+1)(k-1)-(n-t) \nonumber \\
&=&(q+1)(k-1)-(q(k-1)+r-t) \\ &=& (k-1)-r+t \geq (k-1)-(k-2)+t\nonumber \\&=& t+1.\nonumber\end{eqnarray}

By \eqref{Mbound} and the fact that $t+a_{\ell}\geq a_1+a_2\geq k$, we have $0\leq M\leq \omega-2$. Define $c_{i-\ell+\omega}:=a_i$ for $\ell-\omega+1\leq i\leq \ell-M-2$, $c_{\omega-M-1}:=t+\ds{\sum_{s=\ell-M-1}^{\ell}a_s}-(M+1)(k-1)$, and $c_{i}:=k-1$ for $\omega-M\leq i\leq \omega$. Observe that
$$\begin{array}{rcl}\ds{\sum_{s=1}^{\omega}c_s}&=&\ds{\sum_{s=\ell-\omega+1}^{\ell-M-2}c_{s-\ell+\omega}+c_{\omega-M-1}+\sum_{s=\omega-M}^{\omega}c_s}
\medskip \\ &=&\ds{\sum_{s=\ell-\omega+1}^{\ell-M-2}a_s+t+\sum_{s=\ell-M-1}^{\ell}a_s}-(M+1)(k-1)+(M+1)(k-1)\medskip\\
&=&n.\end{array}$$
Clearly $c_s\leq k-1$ and $c_{s-1}\leq c_s$ for $s\neq \omega-M-1$. To see that $c_{\omega-M-1}\leq k-1$, note that by the definition of $M$,
$$c_{\omega-M-1}=t+\sum_{s=\ell-(M+1)}^{\ell}a_s-(M+1)(k-1)<(M+2)(k-1)-(M+1)(k-1)=k-1.$$ To see that $c_{\omega-M-2}\leq c_{\omega-M-1}$, note that
$$\begin{array}{rcl}c_{\omega-M-1}&=&t+\ds{\sum_{s=\ell-M-1}^{\ell}a_s}-(M+1)(k-1)\medskip \\&=&a_{\ell-M-1}+t+\ds{\sum_{s=\ell-M}^{\ell}a_s}-(M+1)(k-1)\medskip \\ &\geq& a_{\ell-M-1}\geq a_{\ell-M-2}=c_{\omega-M-2}.\end{array}$$
If $M\leq \omega-3$, then $c_1+c_2= a_{\ell-\omega+1}+a_{\ell-\omega+2}\geq k$. If $M=\omega-2$, then $c_1+c_2=c_1+(k-1)\geq k$.

Define $\pi:=(a_1-1)^{a_1}(a_2-1)^{a_2}\cdots (a_{\ell}-1)^{a_{\ell}}$ and $\pi':=(c_1-1)^{c_1}(c_2-1)^{c_2}\cdots (c_{\omega}-1)^{c_{\omega}}$, where $d_i$ refers to the $i^{\mathrm{th}}$ degree of $\pi$ and $d'_i$ refers to the $i^{\mathrm{th}}$ degree of $\pi'$. Clearly $\pi'\neq \pi$. We now prove $\pi'\geq \pi$.

First note that when $1\leq i\leq t+a_{\ell-\omega+1}$ we have $d_i'\geq d_1'=a_{\ell-\omega+1}\geq d_i$. Now, let $j\geq \ell-\omega+2$ be defined so that $d_i=a_j-1$ for $i>\ds{\sum_{s=1}^{j-1} a_s}\geq t+a_{\ell-\omega+1}$. If $j\leq \ell-M-1$, then
$$i>t+\sum_{s=\ell-\omega+1}^{j-1} a_s=t+\sum_{s=1}^{j-\ell+\omega-1} c_{s}>\sum_{s=1}^{j-\ell+\omega-1} c_{s}.$$ Thus, $d_i'\geq c_{j-\ell+\omega}-1= a_j-1=d_i$. If $\ell-M\leq j\leq\ell$, then
$$\begin{array}{rcl}
i&>&t+\ds{\sum_{s=\ell-\omega+1}^{j-1} a_s=t+\sum_{s=\ell-\omega+1}^{\ell-M-2}a_s+\sum_{s=\ell-M-1}^{j-1}a_s} \medskip \\
&=& \ds{\sum_{s=1}^{\omega-M-2}c_s+t+\sum_{s=\ell-M-1}^{\ell}a_s-\sum_{s=j}^{\ell}a_s}\medskip \\ &=&\ds{\sum_{s=1}^{\omega-M-2} c_s +c_{\omega-M-1}+(M+1)(k-1) - \sum_{s=j}^{\ell} a_s}\medskip \\
&\geq & \ds{\sum_{s=1}^{\omega-M-1} c_s+(M+1)(k-1)-(\ell-j+1)(k-1)}\medskip \\
&\geq & \ds{\sum_{s=1}^{\omega-M-1}c_s+(M+1)(k-1)-(M+1)(k-1) = \sum_{s=1}^{\omega-M-2} c_s.} \end{array}$$
Thus, $d_i'\geq c_{\ell-M-1}-1=k-2\geq d_i$. This proves that $\pi'\geq \pi$.

\end{proof}

If we have a graph $G=K_{a_1}\cup K_{a_2}\cup\cdots\cup K_{a_{\ell}}$ for which $a_1+a_2+\cdots +a_{\ell}=n$, $1\leq a_1\leq a_2\leq\cdots\leq a_{\ell}\leq k-1$, $a_1+a_2\geq k$, and $\ell>\ceil{n}{k-1}\geq 2$; the proof of Lemma~\ref{part} provides the construction of a graph $G'$ such that $\pi(G')\geq \pi(G)$ and $\omega(G')=\ceil{n}{k-1}$.

\medskip

\textbf{Example:} Let $k=8$ and $n=36$. Consider the partition of $n$ given by $a_1=3$, $a_j=4$ for $2\leq j\leq 8$, and $a_9=5$. This corresponds to the graph $G=K_3\cup 7K_4\cup K_5$, with $\pi(G)=2^3 3^{28} 4^5$. In the context of the proof of Lemma~\ref{part}, we have $\ell=9$, $\omega=6$, and $M=3$. Using the construction given in the proof, we get $c_1=a_4=4$, $c_2=11+21-28=4$, and $c_j=7$ for $3\leq j\leq 6$. Thus, the graph obtained is $G'=2K_4\cup 4K_7$ with degree sequence $\pi(G')=3^86^{28}\geq \pi(G)$.\exend

\begin{lem}\label{sumaj}
Let $n$ be a positive integer and consider two distinct partitions of $n$ of the same length, $n=\sum_{\ell=1}^{m}a_{\ell}=\sum_{\ell=1}^{m} b_{\ell}$, with $a_{\ell}\leq a_{\ell+1}$ and $b_{\ell}\leq b_{\ell+1}$. Let $\pi=\Pi_{\ell=1}^{m}(a_{\ell}-1)^{a_{\ell}}$ and $\pi'=\Pi_{\ell=1}^{m}(b_{\ell}-1)^{b_{\ell}}$. Then, neither $\pi$ nor $\pi'$ can majorize the other.\end{lem}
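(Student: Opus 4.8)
The plan is to argue by contradiction: suppose one of the two sequences majorizes the other. Since the hypothesis is symmetric in $a$ and $b$ (both are distinct partitions of $n$ into $m$ parts), I may assume $\pi'\ge\pi$, and I will deduce that the two partitions coincide, contradicting their distinctness.

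The first step is to re-express componentwise majorization in a form adapted to these clique-union degree sequences. For a partition $x_1\le\cdots\le x_m$ of $n$, every vertex coming from the part $x_\ell$ has degree $x_\ell-1$, so the number of entries of the associated degree sequence that are $\ge d$ equals $G_x(d+1)$, where I set $G_x(t):=\sum_{\ell:\,x_\ell\ge t}x_\ell$. Because $\pi$ and $\pi'$ are both sorted nondecreasingly and both have length $n$, the single implication I need is immediate: if $\pi'\ge\pi$ and $\pi$ has $k$ entries $\ge d$, these are its top $k$ entries, and domination forces the top $k$ entries of $\pi'$ to be $\ge d$ as well; hence $G_b(t)\ge G_a(t)$ for every $t\ge1$. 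I also record $G_a(1)=G_b(1)=n$.

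The second and decisive step is to produce a positively weighted average of the $G_x(t)$ that is forced to agree for the two partitions precisely because they share the same length $m$ and the same sum $n$. Using the telescoping identity $\sum_{t=2}^{x_\ell}\tfrac{1}{t(t-1)}=1-\tfrac1{x_\ell}$, I compute
\[
\sum_{t\ge2}\frac{G_x(t)}{t(t-1)}=\sum_{\ell}x_\ell\Big(1-\frac1{x_\ell}\Big)=\sum_{\ell}(x_\ell-1)=n-m,
\]
a quantity depending only on $n$ and $m$. Consequently $\sum_{t\ge2}\dfrac{G_b(t)-G_a(t)}{t(t-1)}=0$.

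Finally I combine the two steps. Writing $H(t):=G_b(t)-G_a(t)$, the first step gives $H(t)\ge0$ for all $t$, while the second gives $\sum_{t\ge2}H(t)/\big(t(t-1)\big)=0$ with strictly positive weights; therefore $H(t)=0$ for every $t\ge2$, and $H(1)=0$ already. Thus $G_a=G_b$ as functions of $t$, and since the multiplicity of the part value $t$ in $x$ equals $\tfrac{1}{t}\big(G_x(t)-G_x(t+1)\big)$, the two partitions have identical part multiplicities, i.e.\ $a=b$ — the desired contradiction. The crux of the argument, and the part I expect to require the most care, is recognizing that the equal-length hypothesis supplies exactly the second conserved quantity (beyond the automatic equality $G_a(1)=G_b(1)=n$) needed to close the positivity argument, and pinning down the weight $\tfrac{1}{t(t-1)}$ that realizes it.
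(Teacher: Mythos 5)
Your proof is correct, and it takes a genuinely different route from the paper's. The paper works directly on the sorted sequences: assuming $\pi'\ge\pi$ with $\pi'\ne\pi$, it takes the first index $i$ with $d_i'>d_i$, writes $d_i=a_j-1$, observes this forces $a_\ell=b_\ell$ for $\ell<j$ and $b_j>a_j$, and then proves by strong induction on the part index that $a_{j+p+1}\le b_{j+p+1}$ for all subsequent $p$; summing the parts yields $n=\sum_\ell a_\ell<\sum_\ell b_\ell=n$, a contradiction. You instead linearize the problem: componentwise domination of the degree sequences is converted into pointwise domination of the tail sums $G_x(t)=\sum_{\ell:\,x_\ell\ge t}x_\ell$ (your step here is sound, since the entries $\ge d$ of a nondecreasing sequence occupy a suffix and there are exactly $G_x(d+1)$ of them), and the telescoping identity $\sum_{t\ge2}G_x(t)/\bigl(t(t-1)\bigr)=n-m$ --- a quantity the two partitions share precisely because they have equal sum \emph{and} equal length --- then forces $G_a\equiv G_b$ by positivity of the weights, hence equal part multiplicities via $\mu_x(t)=\frac{1}{t}\bigl(G_x(t)-G_x(t+1)\bigr)$, contradicting distinctness. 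Both arguments consume the same two hypotheses but in different places: the paper uses equal length to align part indices in its induction and equal sums only at the final step, whereas you package both into the single conserved quantity $n-m$. The paper's proof is more elementary, pure index-chasing with no analytic-looking identities (though your sum is in fact finite, as $G_x(t)=0$ for $t>\max_\ell x_\ell$); yours is shorter once the weight $1/\bigl(t(t-1)\bigr)$ is in hand, avoids the induction entirely, and makes transparent exactly which invariant obstructs majorization, showing along the way that the degree sequence of a clique-union determines its partition.
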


\begin{proof}
Assume, without loss of generality, that $\pi'\geq \pi$ and $\pi'\neq\pi$. Let $i$ be the first index such that $d_i'>d_i$, and let $d_i=a_j-1$, for the appropriate $j$ value. Then, by the definition of $i$ and the fact that $\pi'\geq \pi$, we have $a_{\ell}=b_{\ell}$ for $1\leq\ell\leq j-1$ and $b_j>a_j$. Also, $i=1+\sum_{\ell=1}^{j-1}a_{\ell}=1+\sum_{\ell=1}^{j-1}b_{\ell}$. Define $k:=i+\sum_{\ell=j}^{j+p}a_{\ell}$ for $0\leq p\leq m-j-1$, so that $d_k=a_{j+p+1}-1$.

\medskip

\textbf{Claim}. $a_{j+p+1}\leq b_{j+p+1}$ for all $p=0,\ldots,m-j-1$.

\medskip

\textit{Proof of Claim}. We prove this by strong induction. When $p=0$, we have $k=i+a_{j}$, and since $b_j>a_j$,
$$a_{j+1}-1=d_{k}=d_{i+a_j}\leq d_{i+b_j}\leq d_{i+b_j}'=b_{j+1}-1,$$ which implies $a_{j+1}\leq b_{j+1}$. Assume for some fixed $q\leq m-j-2$ that $a_{j+p+1}\leq b_{j+p+1}$ for all $0\leq p\leq q$. When $p=q+1$, we have $$k=i+\sum_{\ell=j}^{j+q+1}a_{\ell}<i+\sum_{\ell=j}^{j+q+1}b_{\ell}:=r,$$ so that
$$a_{j+q+2}-1=d_{k}\leq d_{r}\leq d_r'=b_{j+q+2}-1,$$ which implies $a_{j+q+2}\leq b_{j+q+2}$. \openeop

By the definitions of $i$, $j$ and the claim, we have $a_{\ell}=b_{\ell}$ when $\ell<j$, $a_j<b_j$, and $a_{\ell}\leq b_{\ell}$ when $\ell>j$. This implies $$n=\sum_{\ell=1}^{m}a_{\ell}<\sum_{\ell=1}^{m}b_{\ell}=n,$$ which is a contradiction.

\end{proof}

\begin{thm}\label{sinks}
Let $k$, $n$, and $s$ be integers such that $n\geq k\geq 2$ and $1\leq s\leq n-k+2$. Then $\pi$ is a sink for the property of $k$-component order $s$-connected if and only if $\pi=(h_1+s-2)^{h_1}(h_2+s-2)^{h_2}\cdots (h_{\omega}+s-2)^{h_{\omega}}(n-1)^{s-1}$ such that $n-s+1=h_1+h_2+\cdots+h_{\omega}$, $\omega=\ceil{n-s+1}{k-1}$, and $h_1,h_2,\ldots,h_{\omega}\leq k-1$.
\end{thm}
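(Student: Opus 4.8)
The plan is to prove both implications, relying on the structural description in Observation~\ref{obs1} together with Lemmas~\ref{part} and~\ref{sumaj}. Write $P$ for the property of being $k$-component order $s$-connected, and note that the graph $G=K_{s-1}+\bigcup_{j=1}^{\omega}K_{h_j}$ has degree sequence exactly $(h_1+s-2)^{h_1}\cdots(h_\omega+s-2)^{h_\omega}(n-1)^{s-1}$, since each vertex of $K_{h_j}$ gains $s-1$ neighbors from the join while the $s-1$ join vertices become dominating. Thus the stated form is precisely the degree sequence of an edge-maximal non-forcibly-$P$ graph as in Observation~\ref{obs1}, and the two directions amount to showing (i) that a sink must arise this way with $\omega$ as small as possible, and (ii) that every such sequence is in fact maximal in $\overline{P_n}$.

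For the forward direction, suppose $\pi$ is a sink and let $G$ realize $\pi$ with $\kappa_c^{(k)}(G)\leq s-1$. For any non-edge $e$, the sorted sequence $\pi(G+e)$ strictly majorizes $\pi$ (two degrees increase, so the sum strictly increases), and maximality of $\pi$ in $\overline{P_n}$ forces $\pi(G+e)$ to be forcibly~$P$; hence $\kappa_c^{(k)}(G+e)\geq s$. Therefore $G$ is edge-maximal with respect to $\kappa_c^{(k)}(G)\leq s-1$, and Observation~\ref{obs1} gives $G=K_{s-1}+\bigcup_{j=1}^{\omega}K_{h_j}$ with $\sum h_j=n-s+1$, $h_j\leq k-1$, $h_1+h_2\geq k$, and $\omega\geq\ceil{n-s+1}{k-1}$. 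To rule out $\omega>\ceil{n-s+1}{k-1}$, I would apply Lemma~\ref{part} to the partition $h_1+\cdots+h_\omega=n-s+1$ (its hypotheses hold, since $h_1+h_2\geq k$ forces $\ceil{n-s+1}{k-1}\geq2$): this yields a shorter partition $c_1,\dots,c_{\omega'}$ with $\omega'=\ceil{n-s+1}{k-1}$ whose clique degree sequence strictly majorizes that of the $h_j$. Re-attaching the join gives $G'=K_{s-1}+\bigcup_{j=1}^{\omega'}K_{c_j}$ with $\kappa_c^{(k)}(G')\leq s-1$ and $\pi(G')>\pi$, contradicting maximality. Hence $\omega=\ceil{n-s+1}{k-1}$ and $\pi$ has the claimed form.

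For the reverse direction, let $\pi=(h_1+s-2)^{h_1}\cdots(h_\omega+s-2)^{h_\omega}(n-1)^{s-1}$ with $\omega=\ceil{n-s+1}{k-1}$ and $h_j\leq k-1$. First I would observe that $h_1+h_2\geq k$ holds automatically: if $h_1+h_2\leq k-1$ we could merge the two smallest parts into one part of size at most $k-1$, producing a partition of $n-s+1$ into $\omega-1$ parts each at most $k-1$, forcing $\omega-1\geq(n-s+1)/(k-1)$ and hence $\omega>\ceil{n-s+1}{k-1}$, a contradiction. Thus $G=K_{s-1}+\bigcup K_{h_j}$ is edge-maximal and realizes $\pi$ with $\kappa_c^{(k)}(G)\leq s-1$, so $\pi\in\overline{P_n}$ and is majorized by some sink $\pi^*$. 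By the forward direction, $\pi^*=(c_1+s-2)^{c_1}\cdots(c_{\omega}+s-2)^{c_{\omega}}(n-1)^{s-1}$ with the \emph{same} $\omega=\ceil{n-s+1}{k-1}$. Since both sequences share the $(n-1)^{s-1}$ tail and all core entries (being at most $k+s-3\leq n-1$) lie below it, the relation $\pi^*\geq\pi$ reduces, after subtracting the common shift $s-1$, to the majorization of the clique degree sequence $(c_j-1)^{c_j}$ over $(h_j-1)^{h_j}$ for the two length-$\omega$ partitions of $n-s+1$. By Lemma~\ref{sumaj} this is impossible unless the partitions coincide, giving $\pi=\pi^*$; hence $\pi$ is itself a sink.

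The main obstacle I anticipate is the reverse direction, and specifically the clean reduction of sink-to-sink majorization to the join-free setting of Lemma~\ref{sumaj}: one must verify that the $(n-1)^{s-1}$ blocks occupy identical positions (which uses $\sum h_j=\sum c_j=n-s+1$ and that no core entry reaches $n-1$ away from the degenerate case $\omega=1$), so that majorization is governed entirely by the clique cores. The boundary case $s=n-k+2$, where $\omega=1$ and $\pi=(n-1)^{n}$ is the unique, hence trivially maximal, realization $K_n$, should be dispatched separately. Everything else is bookkeeping, since Observation~\ref{obs1} supplies the structure and Lemmas~\ref{part} and~\ref{sumaj} supply the two needed majorization facts.
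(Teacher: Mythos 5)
Your proposal is correct and follows essentially the same route as the paper: the forward direction reduces to an edge-maximal realization via Observation~\ref{obs1} and rules out $\omega>\ceil{n-s+1}{k-1}$ via Lemma~\ref{part}, while maximality of the remaining sequences follows from the pairwise incomparability supplied by Lemma~\ref{sumaj}. You merely make explicit several steps the paper leaves implicit (edge-maximality of a sink's realization, the automatic $h_1+h_2\geq k$, the alignment of the $(n-1)^{s-1}$ blocks that reduces majorization to the join-free setting, and the boundary case $s=n-k+2$), which is sound bookkeeping rather than a different argument.
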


\begin{proof}
If $\pi$ is a sink for the property of $k$-component order $s$-connected, then it must be the degree sequence of a graph $G$ that is edge-maximal with respect to $\kappa_c^{(k)}(G)\leq s-1$. By Observation~\ref{obs1} and Lemma~\ref{part}, $G=K_{s-1}+\ds{\bigcup_{j=1}^{\omega}} K_{a_j}$, where $\ds{\sum_{j=1}^{\omega}} h_j=n-s+1$, $\omega=\ceil{n-s+1}{k-1}$, and $1\leq h_1\leq h_2\leq\cdots\leq h_{\omega}\leq k-1$. Thus,
$$\pi=\pi(G)=(h_1+s-2)^{h_1}(h_2+s-2)^{h_2}\cdots (h_{\omega}+s-2)^{h_{\omega}}(n-1)^{s-1}.$$

Finally, note that for two different sequences $\pi=(h_1+s-2)^{h_1}(h_2+s-2)^{h_2}\cdots (h_{\omega}+s-2)^{h_{\omega}}(n-1)^{s-1}$ and $\pi'=(h_1'+s-2)^{h_1'}(h_2'+s-2)^{h_2'}\cdots (h_{\omega}'+s-2)^{h_{\omega}'}(n-1)^{s-1}$ which satisfy the conditions of the theorem, one can not majorize the other. This is due to Lemma~\ref{sumaj} and the fact that $h_1+h_2+\cdots+h_{\omega}=h_1'+h_2'+\cdots+h_{\omega}'$. \end{proof}

By generating the simplified Chv\'{a}tal-type conditions for the sinks in Theorem~\ref{sinks}, we get the following result.

\begin{thm}\label{bmgeqs} Let $k$, $n$, and $s$ be integers such that $n\geq k\geq 2$ and $1\leq s\leq n-k+2$.
Let $n-s+1=c_1 h_1+c_2 h_2 +\cdots+c_{\ell} h_{\ell}$ be a partition of $n$ such that $\ds{\sum_{i=1}^{\ell}} c_i = \ceil{n-s+1}{k-1}$ and
$1\leq h_1<h_2<\ldots<h_{\ell}\leq k-1$. Define $m_j=\ds{\sum_{i=1}^{j} c_i h_i}$. The best monotone theorem for
$k$-component order $s$-connected on $n$ vertices consists of all conditions of the following form:

\begin{enumerate}

\item If $\ell=1$, then $d_{n-s+1}\geq k+s-1$.

\item If $\ell\geq 2$, then
$$\left[\bigwedge_{j=1}^{\ell-1}(d_{m_j}\leq h_j+s-2)\right]\;\Rightarrow\;d_{n-s+1}\geq h_{\ell}+s-1$$

\end{enumerate}

\noindent for every such partition of $n-s+1$.

\end{thm}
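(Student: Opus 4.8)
The plan is to derive the best monotone theorem directly from the sink characterization in Theorem~\ref{sinks} by computing the simplified Chv\'atal-type condition $C(\pi)$ for each sink and then recognizing that the family of all such conditions is indexed precisely by the partitions described in the statement. By the framework in Section~\ref{s2}, the best monotone theorem has degree condition $\bigwedge_{\pi\in S(P,n)}C(\pi)$, so it suffices to show that as $\pi$ ranges over all sinks, the conditions $C(\pi)$ are exactly the conditions listed in items (1) and (2). First I would fix a sink $\pi=(h_1+s-2)^{h_1}\cdots(h_\omega+s-2)^{h_\omega}(n-1)^{s-1}$ with $1\le h_1\le\cdots\le h_\omega\le k-1$ and $\omega=\ceil{n-s+1}{k-1}$, and translate its multiset of $h$-values into the ``distinct value with multiplicity'' form $n-s+1=c_1h_1+\cdots+c_\ell h_\ell$ with $h_1<\cdots<h_\ell$ and $\sum c_i=\omega$; this is just a bookkeeping reindexing, and it makes the bijection between sinks and the partitions in the statement transparent.

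Next I would compute $C(\pi)$ explicitly. Recall $C(\pi)$ blocks $\pi$ and is simplified by the two rules stated in Section~\ref{s2}: collapse equal thresholds to the last index, and drop any threshold equal to $n$ as redundant. Writing the sorted degree sequence, the first $c_1h_1$ entries equal $h_1+s-2$, the next $c_2h_2$ equal $h_2+s-2$, and so on, with the final $s-1$ entries equal to $n-1$. The partial sums $m_j=\sum_{i=1}^{j}c_ih_i$ are exactly the positions where the degree value jumps, so the unsimplified blocking condition is $\bigwedge_{i}(d_{m_i}\le h_i+s-2)\Rightarrow(\text{something})$. I would apply the collapsing rule to keep only the index $m_j$ for each distinct value $h_j+s-2$, and apply the redundancy rule to discard the block of $(n-1)$-entries (since $d\ge n$ is impossible). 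The key arithmetic check is that the largest non-trivial threshold is $h_\ell+s-2$ occurring at position $m_\ell=n-s+1$, so the right-hand conclusion becomes $d_{n-s+1}\ge h_\ell+s-1$, matching the statement. When $\ell=1$ the hypothesis side is empty and the condition degenerates to the single inequality $d_{n-s+1}\ge k+s-1$ in item (1) (here $h_1=k-1$ is forced, since $\ell=1$ and $\omega=\ceil{n-s+1}{k-1}$ together pin down $h_1=k-1$).

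The one genuinely delicate point — and where I expect the main obstacle — is verifying that this correspondence between sinks and partitions is a \emph{bijection}, so that distinct sinks yield distinct conditions and no condition is double-counted or omitted. Surjectivity and the fact that each listed partition arises from a legitimate sink follow from Theorem~\ref{sinks}. Injectivity is where Lemma~\ref{sumaj} does the real work: two distinct sinks (equivalently, two distinct partitions of $n-s+1$ of the same length $\omega$) give sequences neither of which majorizes the other, which guarantees that no sink is redundant and each contributes a genuinely distinct Chv\'atal-type condition to the conjunction. I would therefore invoke Lemma~\ref{sumaj} at exactly this step to confirm that $|S(P,n)|$ equals the number of listed partitions and that the conjunction is irredundant. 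The remaining verifications — that the exponents sum correctly to $n$, that $m_\ell=n-s+1$, and that the simplification rules were applied correctly — are routine once the indexing is set up, so I would state them briefly and refer back to the simplification conventions fixed in Section~\ref{s2}.
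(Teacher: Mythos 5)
Your overall route is exactly the paper's: the paper proves Theorem~\ref{bmgeqs} in a single sentence, by generating the simplified Chv\'atal-type conditions $C(\pi)$ for the sinks of Theorem~\ref{sinks} and invoking the framework of Section~\ref{s2}, and your computation of $C(\pi)$ --- run lengths $c_jh_j$, jump positions $m_j$, collapsing equal thresholds to the last index, discarding the $(n-1)^{s-1}$ block because $d_i\ge n$ is impossible, and $m_\ell=n-s+1$ --- is the correct elaboration of that sentence. Your appeal to Lemma~\ref{sumaj} for distinctness of the sinks is also consistent with the paper, where it is used inside the proof of Theorem~\ref{sinks} to certify that every sequence of the stated form is in fact maximal.

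The genuine gap is your handling of the $\ell=1$ case, which you dispose of with the parenthetical claim that $\ell=1$ together with $\omega=\lceil(n-s+1)/(k-1)\rceil$ forces $h_1=k-1$. That claim is false: take $k=5$ and $n-s+1=6$, so $\omega=\lceil 6/4\rceil=2$, and the partition $6=2\cdot 3$ has $\ell=1$ with $h_1=3<k-1=4$; the corresponding sink is $\pi=(s+1)^6(n-1)^{s-1}$, realized by $K_{s-1}+2K_3$, which is genuinely edge-maximal since $h_1+h_2=6\ge k$. (Similarly $9=3\cdot 3$ for $k=5$.) For such a sink the weakest blocking condition is $d_{n-s+1}\ge h_1+s-1$ (here $s+2$), not $d_{n-s+1}\ge k+s-1$. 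Worse, even in the divisible case $h_1=k-1$, your own general formula $d_{m_\ell}\ge h_\ell+s-1$ evaluates to $d_{n-s+1}\ge k+s-2$, not the $k+s-1$ of item (1); compare Corollaries~\ref{keq3}--\ref{keq5}, whose $\ell=1$ conditions read $d_{n-s+1}\ge s+1,\,s+2,\,s+3$ for $k=3,4,5$, i.e., $k+s-2$, and compare Theorem~\ref{simp1}. So item (1) as printed carries an off-by-one and silently assumes $h_1=k-1$, and the right move in a blind proof was to derive $d_{n-s+1}\ge h_1+s-1$ for every $\ell=1$ partition and flag the discrepancy with the statement; instead you introduced a false auxiliary claim and an unflagged arithmetic slip to force agreement. (Note that the paper's corollaries effectively absorb the $h_1<k-1$ sinks into degenerate item-(2) conditions such as $d_{n-s+1}\le h_1+s-2\Rightarrow d_{n-s+1}\ge h_2+s-1$, which is logically equivalent to $d_{n-s+1}\ge h_1+s-1$, so your reconciliation needed an argument, not an assertion.) Everything else in your write-up is sound.
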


As examples, we provide the specific conditions when $k=3,4,$ and 5.

\begin{cor}\label{keq3} Let $s$ be an integer and let $\pi=(d_{1}\leq \dots \leq d_{n})$ for $n\geq 3$ and $1\leq s\leq n-1$. If

\begin{enumerate}

\item $n-s+1$ is even and $d_{n-s+1}\geq s+1$, or

\item $n-s+1$ is odd and $d_1\leq s-1 \Rightarrow d_{n-s+1}\geq s+1$,

\end{enumerate}

then $\pi$ is forcibly $3$-component order $s$-connected.\end{cor}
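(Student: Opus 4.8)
The plan is to obtain the corollary as the $k=3$ instance of Theorem~\ref{bmgeqs}, but the most transparent route is to enumerate the sinks directly from Theorem~\ref{sinks} and form their Chv\'{a}tal-type conditions, since for $k=3$ it turns out that there is a single sink in each case. Throughout I set $N:=n-s+1$ and note that the hypothesis $1\le s\le n-1$ forces $2\le N\le n$, so in particular $N\ge 2$. With $k-1=2$, every part $h_j$ of the relevant partition lies in $\{1,2\}$, and the number of parts is $\omega=\lceil N/2\rceil$.

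First I would establish the key combinatorial fact: there is exactly one partition of $N$ into $\omega=\lceil N/2\rceil$ parts, each equal to $1$ or $2$. Writing $a$ and $b$ for the number of parts equal to $1$ and $2$ respectively, the constraints $a+b=\omega$ and $a+2b=N$ form a linear system with the unique solution $b=N-\omega$, $a=2\omega-N$. When $N$ is even this gives $a=0$, $b=N/2$ (all parts equal to $2$); when $N$ is odd it gives $a=1$, $b=(N-1)/2$ (one part equal to $1$, the rest equal to $2$). Hence, by Theorem~\ref{sinks}, there is a unique sink in each parity case, so the conjunction $\bigwedge_{\pi\in S(P,n)}C(\pi)$ furnished by the framework of Section~\ref{s2} collapses to a single Chv\'{a}tal-type condition; in particular the inherent complexity here is $1$.

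Next I would write down each sink and simplify its condition $C(\pi)$. For $N$ even the sink is the degree sequence of $G=K_{s-1}+\tfrac{N}{2}K_2$, namely $\pi=s^{N}(n-1)^{s-1}$ (a clique vertex has degree $1+(s-1)=s$ and a center vertex has degree $n-1$). Forming $C(\pi)$ and discarding the redundant clauses $d_i\ge n$ coming from the $(n-1)$-entries, the repeated threshold on the first $N=n-s+1$ entries collapses to the single clause $d_{n-s+1}\ge s+1$, which is case~(1). For $N$ odd the sink is the degree sequence of $G=K_{s-1}+\bigl(K_1\cup\tfrac{N-1}{2}K_2\bigr)$, namely $\pi=(s-1)^{1}s^{N-1}(n-1)^{s-1}$ (the vertex forming $K_1$ has degree $s-1$). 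Here $C(\pi)$ simplifies to $d_1\ge s\ \lor\ d_{n-s+1}\ge s+1$, i.e.\ $d_1\le s-1\Rightarrow d_{n-s+1}\ge s+1$, which is case~(2). In both cases it is the unsatisfiability of $d_i\ge n$ in a simple graph that removes the center vertices from the condition, while the equality of the non-center thresholds is what merges them into the single index $n-s+1$.

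The main obstacle is not any single hard estimate but rather the bookkeeping in the last two steps: I must verify that for $k=3$ the partition is genuinely unique (so that no real conjunction survives), and that the two simplification rules for $C(\pi)$ recalled in Section~\ref{s2}---deleting $d_i\ge n$ and merging equal thresholds into their largest index---apply exactly as claimed, so that the even case yields a single implication-free clause while the odd case yields a genuine implication whose antecedent $d_1\le s-1$ records the lone degree-$(s-1)$ vertex. Once these are checked, appealing to the best-monotone framework of Section~\ref{s2} (the conjunction of the $C(\pi)$ over all sinks is the best monotone theorem) completes the argument.
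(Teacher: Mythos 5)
Your proposal is correct and follows essentially the same route as the paper: the paper's proof simply exhibits the unique partition of $n-s+1$ into $\lceil (n-s+1)/2\rceil$ parts of size at most $2$ for each parity (all $2$'s when even; one $1$ and the rest $2$'s when odd) and reads off the conditions from Theorem~\ref{bmgeqs}, while you unpack that same derivation one level, confirming uniqueness via the linear system $a+b=\omega$, $a+2b=n-s+1$ and simplifying $C(\pi)$ for the two sinks of Theorem~\ref{sinks} directly. Your extra care in deriving the even-case threshold $d_{n-s+1}\ge s+1$ from the sink $s^{n-s+1}(n-1)^{s-1}$ rather than from the $\ell=1$ template of Theorem~\ref{bmgeqs} is sound (and consistent with the corollary), so nothing is missing.
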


\begin{proof}
When $n-s+1$ is even, the partition is $n-s+1=\left(\frac{n-s+1}{2}\right)\cdot 2$. When $n-s+1$ is odd, the partition is $n-s+1=1+\left(\frac{n-s}{2}\right)\cdot 2$.
\end{proof}

\begin{cor} Let $s$ be an integer and let $\pi=(d_{1}\leq \dots \leq d_{n})$ for $n\geq 4$ and $1\leq s\leq n-2$.
Define $i$ so that $n-s+1\equiv i\bmod{3}$, with $0\leq i\leq 2$. If

\begin{enumerate}

\item $i=0$ and $d_{n-s+1}\geq s+2$, or

\item $i=1$ and $(d_1\leq s-1 \lor d_4\leq s) \Rightarrow d_{n-s+1}\geq s+2$, or

\item $i=2$ and $d_2\leq s \Rightarrow d_{n-s+1}\geq s+2$,

\end{enumerate}

then $\pi$ is forcibly $4$-component order $s$-connected.\end{cor}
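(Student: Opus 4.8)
The plan is to derive this directly from Theorem~\ref{bmgeqs} with $k=4$, exactly as Corollary~\ref{keq3} was derived for $k=3$. Setting $k-1=3$, every admissible part $h_i$ must lie in $\{1,2,3\}$, and the task reduces to enumerating all partitions of $N:=n-s+1$ into exactly $\omega=\ceil{N}{3}$ parts of size at most $3$, grouping equal parts to read off $\ell$, the $c_i$, the $h_i$, and the thresholds $m_j=\sum_{i\le j}c_ih_i$. A convenient way to organize the enumeration is through the \emph{deficit} $3\omega-N\in\{0,1,2\}$: starting from the all-threes multiset $\{3^{\omega}\}$ (which sums to $3\omega$), each admissible partition arises by lowering individual parts, where lowering a part to $2$ costs $1$ and lowering it to $1$ costs $2$, and the total amount lowered equals the deficit. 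Since $1\le s\le n-2$ forces $N\ge 3$, all the partitions below are legitimate, and a short computation gives deficit $0,2,1$ according as $i\equiv 0,1,2\pmod 3$.

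First I would dispatch $i=0$: the deficit is $0$, so the only partition is $\{3^{\omega}\}$, giving $\ell=1$ and $h_1=3$; case~1 of Theorem~\ref{bmgeqs} then yields the single condition $d_{n-s+1}\ge h_1+s-1=s+2$, which is case~1 of the corollary. For $i=2$ the deficit is $1$, realizable only by lowering one part to $2$, so the unique partition is $\{2,3^{q}\}$ with $q=\floor{N}{3}\ge 1$; here $\ell=2$, $h_1=2$, $c_1=1$, $h_2=3$, hence $m_1=c_1h_1=2$, and case~2 of Theorem~\ref{bmgeqs} produces $d_2\le s\Rightarrow d_{n-s+1}\ge s+2$, matching case~3 of the corollary.

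The case $i=1$ is the only one contributing two partitions, since a deficit of $2$ can be obtained either by lowering one part to $1$, giving $\{1,3^{q}\}$, or by lowering two parts to $2$, giving $\{2^2,3^{q-1}\}$. These yield $m_1=1$ and $m_1=4$ respectively, and therefore the two Chv\'atal-type conditions $d_1\le s-1\Rightarrow d_{n-s+1}\ge s+2$ and $d_4\le s\Rightarrow d_{n-s+1}\ge s+2$. The final step is to observe that the conjunction of these two implications is logically equivalent to the single implication $(d_1\le s-1\,\lor\,d_4\le s)\Rightarrow d_{n-s+1}\ge s+2$, which is precisely case~2 of the corollary.

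I expect the only genuine care to be needed in two places. First, one must confirm that the deficit enumeration is exhaustive, i.e.\ that no further multiset of parts in $\{1,2,3\}$ of length $\omega$ and sum $N$ has been overlooked; this is immediate once the deficit is seen to be at most $2$, since a total reduction of at most $2$ among the parts admits only the listed patterns. Second, one should check the small-$N$ boundary, where an index such as $m_1=4$ can coincide with $n-s+1$ (for instance when $N=4$), so that the derived condition relates $d_{n-s+1}$ to itself; a quick inspection shows the listed conditions still state the theorem correctly in these degenerate cases. Neither point poses a conceptual difficulty, so the bulk of the argument is simply the careful accounting of parts.
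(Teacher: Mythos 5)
Your proposal is correct and follows essentially the same route as the paper: the paper's proof consists precisely of listing, for each residue $i$, the partitions of $n-s+1$ into $\ceil{n-s+1}{3}$ parts of size at most $3$ and reading off the conditions from Theorem~\ref{bmgeqs}. Your deficit bookkeeping merely makes explicit the exhaustiveness of that enumeration (which the paper leaves implicit), and your handling of the degenerate case $m_1=n-s+1$ is a sound extra check rather than a departure from the paper's argument.
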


\begin{proof}
When $i=0$, the partition of $n-s+1$ is $\left(\frac{n-s+1}{3}\right)\cdot 3$. When $i=1$, the partitions are $1+\left(\frac{n-s}{3}\right)\cdot 3$ and $2\cdot 2+\left(\frac{n-s-3}{3}\right)\cdot 3$. When $i=2$, the partition is $2+\left(\frac{n-s-1}{3}\right)\cdot 3$.
\end{proof}

\begin{cor}\label{keq5} Let $s$ be an integer and let $\pi=(d_{1}\leq \dots \leq d_{n})$ for $n\geq 5$ and $1\leq s\leq n-3$.
Define $i$ so that $n-s+1\equiv i\bmod{4}$, with $0\leq i\leq 3$. If

\begin{enumerate}

\item $i=0$ and $d_{n-s+1}\geq s+3$, or

\item $i=1$ and $(d_1\leq s-1 \lor (d_2\leq s \land d_5\leq s+1) \lor d_9\leq s+1) \Rightarrow d_{n-s+1}\geq s+3$, or

\item $i=2$ and $(d_2\leq s \lor d_6\leq s+1 ) \Rightarrow d_{n-s+1}\geq s+3$, or

\item $i=3$ and $d_3\leq s+1 \Rightarrow d_{n-s+1}\geq s+3$,

\end{enumerate}

then $\pi$ is forcibly $5$-component order $s$-connected.\end{cor}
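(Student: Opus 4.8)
The plan is to specialize Theorem~\ref{bmgeqs} to $k=5$, so that $k-1=4$ and $\omega=\ceil{n-s+1}{4}$, and then to turn the abstract recipe of that theorem into the four explicit cases by enumerating the relevant integer partitions. Writing $N=n-s+1$ and $N\equiv i\pmod 4$ with $0\le i\le 3$, Theorem~\ref{sinks} tells us that the sinks for $5$-component order $s$-connected correspond exactly to the partitions $N=h_1+\cdots+h_\omega$ into $\omega=\ceil{N}{4}$ parts, each $h_j\le 4$. So the whole task reduces to listing, for each residue $i$, all such partitions, and reading off for each one the Chv\'atal-type condition $\bigwedge_{j<\ell}(d_{m_j}\le h_j+s-2)\Rightarrow d_{n-s+1}\ge h_\ell+s-1$ supplied by Theorem~\ref{bmgeqs}, where $h_1<\cdots<h_\ell$ are the distinct part sizes with multiplicities $c_i$ and $m_j=\sum_{i\le j}c_ih_i$.

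First I would organize the enumeration by the deficiency $D=4\omega-N$, which equals $0,3,2,1$ according as $i=0,1,2,3$. Because $\omega$ is the least number of parts of size $\le 4$ that can sum to $N$, no two parts may be merged into a single part of size $\le 4$; this automatically forces $h_1+h_2\ge 5$ and means that a valid partition is obtained precisely by subtracting a total of $D$ from the all-$4$'s partition $4^\omega$, with each part absorbing a deficiency in $\{0,1,2,3\}$. Enumerating the partitions of $D$ into summands $\le 3$ therefore lists every sink: for $D=0$ only $4^\omega$; for $D=1$ only $3\,4^{\omega-1}$; for $D=2$ the partitions $2\,4^{\omega-1}$ and $3^2\,4^{\omega-2}$; and for $D=3$ the partitions $1\,4^{\omega-1}$, $2\,3\,4^{\omega-2}$, and $3^3\,4^{\omega-3}$. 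Substituting each into Theorem~\ref{bmgeqs} is then routine; for example $3^3\,4^{\omega-3}$ has $h_1=3$, $c_1=3$, $m_1=9$, hence the condition $d_9\le s+1\Rightarrow d_{n-s+1}\ge s+3$.

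Then I would merge, for each fixed $i$, the conditions coming from the different partitions. Every partition above has largest part $4$, so $h_\ell=4$ and each condition carries the same consequent $d_{n-s+1}\ge h_\ell+s-1=s+3$. Consequently the conjunction $\bigwedge_p\bigl(A_p\Rightarrow d_{n-s+1}\ge s+3\bigr)$ of the individual implications collapses to $\bigl(\bigvee_p A_p\bigr)\Rightarrow d_{n-s+1}\ge s+3$, which reproduces the disjunctive antecedents of the statement (for $i=0$ the single partition $4^\omega$ has $\ell=1$ and gives the bare bound $d_{n-s+1}\ge s+3$; for $i=1$ the three antecedents $d_1\le s-1$, $d_2\le s\wedge d_5\le s+1$, and $d_9\le s+1$; and so on for $i=2,3$).

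The one point requiring care is the behavior for small $q=\Floor{N/4}$, where a multiplicity such as $\omega-3$ can vanish and a partition degenerates (e.g.\ $3^3\,4^{\omega-3}$ becomes $3^3$ when $q=2$, and the disjunct with index $9$ disappears altogether once $N<9$). In the first situation $h_\ell$ drops to $3$ while the relevant index $m_j$ coincides with $n-s+1$, so one must check that the stated implication, now with an infeasible consequent, collapses to the correct weaker bound: $d_9\le s+1\Rightarrow d_{n-s+1}\ge s+3$ reduces to $d_{n-s+1}\ge s+2$ once $d_{n-s+1}\ge s+3$ is recognized as unattainable, matching the $\ell=1$ value $h_1+s-1=s+2$. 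Verifying that every index stays within range (using the paper's convention that conditions with index exceeding $n$ are dropped) and that these degenerate collapses agree with the uniform statement is the only genuinely delicate bookkeeping; everything else is direct substitution into Theorem~\ref{bmgeqs}.
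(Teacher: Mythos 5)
Your proposal is correct and takes essentially the same route as the paper: the paper's proof consists precisely of listing, for each residue $i$, the partitions of $n-s+1$ into $\Ceil{(n-s+1)/4}$ parts of size at most $4$ (namely $4^{\omega}$; then $1\,4^{\omega-1}$, $2\cdot3\cdot4^{\omega-2}$, $3^3 4^{\omega-3}$; then $2\cdot 4^{\omega-1}$, $3^2 4^{\omega-2}$; then $3\cdot 4^{\omega-1}$) and reading off the conditions of Theorem~\ref{bmgeqs}, exactly as you do. Your deficiency-based justification of the enumeration and your check that the degenerate small cases (e.g.\ $n-s+1=9$, where the implication collapses to $d_9\ge s+2$) remain consistent are extra bookkeeping the paper leaves implicit, not a different method.
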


\begin{proof}
When $i=0$, the partition of $n-s+1$ is $\left(\frac{n-s+1}{4}\right)\cdot 4$. When $i=1$, the partitions are $1+\left(\frac{n-s}{4}\right)\cdot 4$, $2+3+\left(\frac{n-s-4}{4}\right)\cdot 4$, and $3\cdot 3+\left(\frac{n-s-5}{4}\right)\cdot 4$. When $i=2$, the partitions are $2+\left(\frac{n-s-1}{4}\right)\cdot 4$ and $2\cdot 3+\left(\frac{n-s-5}{4}\right)\cdot 4$. When $i=3$, the partition is $3+\left(\frac{n-s-2}{4}\right)\cdot 4$.
\end{proof}

As we can see, the number of conditions grows with $k$ (as a result of there being more sinks). It is shown in \cite{Y14} that the inherent complexity of $k$-component order $s$-connected is at most $F_{k+2}-k$, where $F_i$ is the $i^{\mathrm{th}}$ Fibonacci number when $F_1=F_2=1$.

\begin{obs} Consider $k=n-s+1$ in Theorem~\ref{bmgeqs}. Note that
$$\ds{\sum_{j=1}^{\ell}} c_j = \ceil{n-s+1}{k-1}=\ceil{n-s+1}{n-s}=2.$$
Since $c_j\geq 1$, this implies that either $c_1=c_2=1$ or $c_1=2$. Thus, $n-s+1=h_1+h_2$ for $h_2\geq h_1\geq 1$. We also have $h_2+1\leq h_2+h_1=n-s+1$, which implies $h_2\leq n-s=k-1$. Let $i=h_1$, so that $h_2=n-i-s+1$ and $i\leq n-i-s+1$, or $i\leq \frac{n-s+1}{2}$. Here, the sinks are $G_i=K_{s-1}+(K_i\cup K_{n-i-s+1})$, so that the degree conditions are $d_i\leq i+s-2\,\Rightarrow\, d_{n-s+1}\geq n-i$ for $i\leq \frac{n-s+1}{2}$. These are exactly the degree conditions for the best monotone theorem for $s$-connected, which is consistent with the fact that $G$ is $s$-connected if and only if $\kappa_c^{(n-s+1)}(G)\geq s$~\textnormal{\cite{BGS98}}.
\end{obs}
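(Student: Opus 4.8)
The plan is to specialize the best monotone theorem of Theorem~\ref{bmgeqs} to $k=n-s+1$ and verify that the resulting family of Chv\'atal-type conditions is exactly the Bondy--Boesch family of Theorem~\ref{BB}, with the connectivity parameter there playing the role of $s$. First I would substitute $k-1=n-s$ into the requirement $\sum_{j=1}^{\ell}c_j=\ceil{n-s+1}{k-1}$ and note that $\ceil{n-s+1}{n-s}=2$ (the constraints $n\geq k\geq 2$ force $n>s$, so $1<\frac{n-s+1}{n-s}\leq 2$). Hence every admissible partition of $n-s+1$ used in Theorem~\ref{bmgeqs} has exactly two parts counted with multiplicity. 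Enumerating the solutions of $\sum_j c_j=2$ with each $c_j\geq 1$ leaves only two shapes: $\ell=2$ with $c_1=c_2=1$ (two distinct parts), or $\ell=1$ with $c_1=2$ (one repeated part).

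Next I would fold both shapes into a single one-parameter family. Writing the two parts with multiplicity as $h_1\leq h_2$, in either case $h_1+h_2=n-s+1$, and the ceiling-constraint $h_2\leq k-1=n-s$ holds automatically since $h_2=(n-s+1)-h_1\leq n-s$. Putting $i:=h_1$ gives $h_2=n-s+1-i$, and $h_1\leq h_2$ becomes $i\leq\frac{n-s+1}{2}$. By Theorem~\ref{sinks} the associated sink is $G_i=K_{s-1}+(K_i\cup K_{n-i-s+1})$, the familiar edge-maximal non-$s$-connected graph.

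I would then compute the degree sequence of $G_i$ and extract its Chv\'atal-type condition. The $i$ vertices of $K_i$ have degree $i+s-2$, the $n-i-s+1$ vertices of $K_{n-i-s+1}$ have degree $n-i-1$, and the $s-1$ dominating vertices have degree $n-1$, so $$\pi(G_i)=(i+s-2)^{i}\,(n-i-1)^{n-i-s+1}\,(n-1)^{s-1}.$$ Applying the template of Theorem~\ref{bmgeqs} (equivalently, simplifying $C(\pi(G_i))$ directly) yields the single condition $d_i\leq i+s-2\Rightarrow d_{n-s+1}\geq n-i$. Letting $i$ range over $1\leq i\leq\frac{n-s+1}{2}$ reproduces exactly the conditions of Theorem~\ref{BB} with its connectivity parameter equal to $s$, i.e.\ the best monotone theorem for $s$-connected.

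The step needing the most care is reconciling the two partition shapes at the boundary $i=\frac{n-s+1}{2}$, which is the $\ell=1$, $c_1=2$ case. Here $h_1=h_2$, all $n-s+1$ clique vertices share the degree $i+s-2=n-i-1$, and Theorem~\ref{bmgeqs} produces the unconditional bound $d_{n-s+1}\geq n-i$; I would have to check this is the same condition as the Bondy--Boesch implication $d_i\leq i+s-2\Rightarrow d_{n-s+1}\geq n-i$. The key point is that at equal parts the two disjuncts of $d_i\geq i+s-1\,\lor\,d_{n-s+1}\geq n-i$ share the same threshold $i+s-1=n-i$, so the monotonicity $d_i\leq d_{n-s+1}$ collapses the implication to the unconditional form. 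Having matched all conditions, I would close by invoking $\kappa_c^{(n-s+1)}(G)\geq s\iff G\text{ is }s\text{-connected}$ from~\cite{BGS98} as the structural reason the two best monotone theorems must coincide.
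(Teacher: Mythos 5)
Your proposal is correct and follows essentially the same route as the paper's inline reasoning: compute $\ceil{n-s+1}{n-s}=2$, enumerate the two-part partitions $h_1+h_2=n-s+1$, identify the sinks $K_{s-1}+(K_i\cup K_{n-i-s+1})$, and read off the conditions $d_i\le i+s-2\Rightarrow d_{n-s+1}\ge n-i$ matching Theorem~\ref{BB}. Your extra care at the boundary case $h_1=h_2$ (where the $\ell=1$, $c_1=2$ partition yields the unconditional form, equivalent to the implication via the paper's simplification rule for equal thresholds since $d_i\le d_{n-s+1}$) is a detail the paper passes over silently, but it is the same argument, just more explicit.
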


Note that the sinks for $k$-component order 1-edge connected are the same as for $k$-component order 1-connected. Thus, by setting $s=1$ in Theorem~\ref{bmgeqs}, we get the following theorem.

\begin{thm}\label{bmgeq1} Let $k$ and $n$ be integers with and $n\geq k\geq 2$.
Let $n=c_1 h_1+c_2 h_2 +\cdots+c_{\ell} h_{\ell}$ be a partition of $n$ such that $\ds{\sum_{i=1}^{\ell}} c_i = \ceil{n}{k-1}$ and
$1\leq h_1<h_2<\ldots<h_{\ell}\leq k-1$. Define $m_j=\ds{\sum_{i=1}^{j} c_i h_i}$. The best monotone theorem for
$k$-component order $1$-edge connected on $n$ vertices consists of all conditions of the form
$$\left[\bigwedge_{j=1}^{\ell-1}(d_{m_j}\leq h_j-1)\right]\;\Rightarrow\;d_n\geq h_{\ell}$$ for every such partition of $n$.
\end{thm}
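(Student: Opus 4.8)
The plan is to reduce Theorem~\ref{bmgeq1} to the already-established Theorem~\ref{bmgeqs} by showing that the two properties ``$k$-component order $1$-connected'' and ``$k$-component order $1$-edge connected'' fail on exactly the same graphs, so that they share the same poset $\overline{P_n}$ and hence the same sink set $S(P,n)$; the conditions then drop out by specializing the $s=1$ case. First I would check the coincidence of failure sets directly. A graph $G$ fails to be $k$-component order $1$-connected precisely when $\kappa_c^{(k)}(G)=0$, i.e.\ removing no vertices already leaves every component of order at most $k-1$; equivalently, every component of $G$ has order at most $k-1$. Likewise, $G$ fails to be $k$-component order $1$-edge connected precisely when $\lambda_c^{(k)}(G)=0$, i.e.\ removing no edges already leaves every component of order at most $k-1$, which is the identical condition. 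Thus the two subposets $\overline{P_n}$ agree, and so do their maximal elements $S(P,n)$.

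Next I would read off the structure of these sinks from Theorem~\ref{sinks} at $s=1$. By Observation~\ref{obs1} (with the deleted set $X=\emptyset$ since $s-1=0$) together with Lemma~\ref{part}, an edge-maximal graph all of whose components have order at most $k-1$ is a disjoint union of exactly $\omega=\ceil{n}{k-1}$ cliques, say $K_{h_1}\cup\cdots\cup K_{h_\omega}$ with $1\le h_1\le\cdots\le h_\omega\le k-1$ and $h_1+h_2\ge k$. Grouping the distinct clique orders as $h_1<\cdots<h_\ell$ with multiplicities $c_1,\ldots,c_\ell$, the degree sequence is $\pi=(h_1-1)^{c_1h_1}(h_2-1)^{c_2h_2}\cdots(h_\ell-1)^{c_\ell h_\ell}$, where $\sum_i c_ih_i=n$ and $\sum_i c_i=\ceil{n}{k-1}$. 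This is exactly the sink of Theorem~\ref{sinks} with $s=1$, the trailing $(n-1)^{s-1}$ block now being empty.

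Finally I would compute the simplified Chv\'atal-type condition $C(\pi)$ for each such sink and invoke the framework of Section~\ref{s2}, which identifies the best monotone theorem with $\bigwedge_{\pi\in S(P,n)}C(\pi)$. Since the value $h_j-1$ occupies the positions $m_{j-1}+1,\ldots,m_j$ with $m_j=\sum_{i\le j}c_ih_i$ and $m_\ell=n$, collapsing repeated values turns $C(\pi)$ into the disjunction $d_{m_1}\ge h_1\,\lor\cdots\lor\,d_{m_\ell}\ge h_\ell$; none of these clauses is redundant because $h_\ell\le k-1<n$. Rewriting in implication form gives exactly $\bigwedge_{j=1}^{\ell-1}(d_{m_j}\le h_j-1)\Rightarrow d_n\ge h_\ell$, and ranging over all admissible partitions of $n$ yields the stated theorem. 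The one step I would guard most carefully is the first: the entire reduction rests on the failure sets genuinely coinciding, after which everything is a mechanical specialization of Theorem~\ref{bmgeqs} to $s=1$. I would also note that under $s=1$ the two listed clauses of Theorem~\ref{bmgeqs} merge into the single implication above, with the empty conjunction handling the $\ell=1$ partition $n=\bigl(\ceil{n}{k-1}\bigr)(k-1)$ and producing the clause $d_n\ge h_\ell$.
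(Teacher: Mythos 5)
Your proof is correct and takes essentially the same route as the paper, whose entire argument is the prefatory remark that the sinks for $k$-component order $1$-edge connected coincide with those for $k$-component order $1$-connected (since $\kappa_c^{(k)}(G)=0$ if and only if $\lambda_c^{(k)}(G)=0$, both meaning every component of $G$ has order at most $k-1$), followed by setting $s=1$ in Theorem~\ref{bmgeqs}. Your explicit verification of the coinciding failure sets, the sink structure via Theorem~\ref{sinks}, and the simplification of $C(\pi)$ merely fills in details the paper leaves implicit, and your handling of the $\ell=1$ case (the empty conjunction yielding $d_n\ge h_\ell$) is the correct specialization consistent with the stated theorem.
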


Next, we will determine the sinks for $k$-component order 2-edge connected, and we discuss the inherent complexity for $k$-component order $s$-edge connected when $s\geq 3$.

\begin{obs} Consider two partitions of $n$ having the same length $n=S=h_1+h_2+\cdots +h_{\omega}$ and $n=S'=h'_1+h'_2+\cdots +h'_{\omega}$ such that $h_1<h'_1$, $h_1\leq h_2\leq \cdots\leq h_{\omega}\leq k-1$, and $2\leq h'_1\leq h'_2\leq \cdots\leq h'_{\omega}\leq k-1$. Then $S$ can be achieved from $S'$ by a sequence of transformations in which 1 is subtracted from a part, and 1 is added to another part of equal or greater value.
\end{obs}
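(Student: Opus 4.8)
The plan is to recognize the permitted operation for what it is. Replacing two parts $a\le b$ of a partition by $a-1$ and $b+1$ is an elementary \emph{reverse transfer}, and it is classical (the covering-relation description of the dominance lattice of partitions, a Muirhead/Hardy--Littlewood--P\'olya-type fact) that among the partitions of $n$ into a fixed number $\omega$ of parts, the ones reachable from a given partition by such reverse transfers are \emph{exactly} those that dominate it in the majorization order on the parts, i.e. those whose partial sums, with the parts listed in nonincreasing order, are everywhere at least as large. So I would split the task in two: first reduce the claim to the single inequality ``$S$ dominates $S'$'', and then either invoke this classical fact or, to keep the argument self-contained, reprove its constructive half as below.

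For the constructive half I would argue greedily. View $S$ and $S'$ as weakly increasing $\omega$-tuples $(h_1,\dots,h_\omega)$ and $(h'_1,\dots,h'_\omega)$. As long as $S'\neq S$, let $j$ be the least index with $h'_j\neq h_j$; the equal-length and equal-sum hypotheses together with dominance force $h'_j>h_j$, and they also force a later index $\ell>j$ with $h'_\ell<h_\ell$. Because the tuple is sorted, $h'_\ell\ge h'_j$, so subtracting $1$ from the part in position $j$ and adding $1$ to the part in position $\ell$ is a \emph{legal} step: we add to a part of at least equal value. This step leaves position $j$ at $h'_j-1\ge h_j$ and position $\ell$ at $h'_\ell+1\le h_\ell$, so it moves $S'$ toward $S$ in both coordinates without overshooting. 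Each step strictly raises $S'$ in the (finite) dominance order while keeping it below $S$, so the process must terminate, and it can halt only at $S$, since whenever $S'$ lies strictly below $S$ the indices $j,\ell$ above supply a further legal step. The only bookkeeping is to re-sort after each step and reapply the rule, which is routine.

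The hard part will be the first half: proving that $S$ dominates $S'$, and this is where every hypothesis must be used. The inequality $h_1<h'_1$ gives $S$ a head start at the small end, but on its own it is not enough — what must be excluded is $S'$ carrying a strictly larger \emph{maximal} part than $S$, which would already defeat the first nonincreasing partial sum. The uniform upper bound $h_\omega,h'_\omega\le k-1$ on all parts is precisely the feature that prevents this, and in the application, where $\omega$ is the minimum possible number of parts (so that $n>(\omega-1)(k-1)$ and every part is squeezed close to $k-1$), it forces the nonincreasing partial sums of $S$ to dominate those of $S'$ term by term. I would therefore establish dominance by a direct term-by-term comparison of the nonincreasing partial sums, using the cap $k-1$ to control the largest parts from above and $h_1<h'_1$ to seed the comparison at the bottom, and then feed the result into the transfer argument of the previous paragraph. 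I expect this dominance step, rather than the transfer construction, to be the genuine obstacle, since it is the only place where the bound on the part sizes is indispensable.
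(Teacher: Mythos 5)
Your reduction of the statement to majorization is the right lens, and the constructive half of your plan is sound: the permitted move is a reverse transfer, such moves only push a partition upward in the dominance order, and conversely any partition of the same length and sum that dominates $S'$ is reachable by your greedy scheme (taking $\ell$ to be the \emph{least} index beyond $j$ with $h'_\ell<h_\ell$ makes your ``keeping below $S$'' invariant airtight, and it also keeps every intermediate part between $1$ and $k-1$ automatically). For comparison: the paper offers no proof at all here --- the statement is an Observation supported only by the worked example $4+4+4+4+5\to 2+4+5+5+5$ --- so your dominance framing goes beyond anything in the paper itself.

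However, the step you correctly single out as the genuine obstacle --- deducing that $S$ dominates $S'$ from $h_1<h'_1$ and the cap $k-1$ --- is not merely left unproven in your write-up; it is false, and with it the Observation as stated. Take $k=11$ (so $k-1=10$), $n=21$, $\omega=3$, $S=3+9+9$ and $S'=4+7+10$. Every hypothesis holds: equal length and sum, $h_1=3<4=h'_1$, $3\le 9\le 9\le 10$, and $2\le 4\le 7\le 10$. Yet the largest part of $S'$ is $10$ while that of $S$ is $9$, and the allowed move can never decrease the largest part (to subtract $1$ from a maximal part you must add $1$ to a part of equal or greater value, i.e.\ to another maximal part, which raises the maximum), so $S$ is unreachable from $S'$; equivalently, $S$ fails to dominate $S'$ already at the first nonincreasing partial sum. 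Your hoped-for rescue via the application's implicit constraints also fails: here $\omega=\Ceil{n/(k-1)}=3$ is minimal and both partitions satisfy $h_1+h_2\geq k$ ($12\geq 11$ and $11\geq 11$), so both arise from edge-maximal graphs in the sense of the paper. Thus your plan cannot be completed as written; the Observation would need a strictly stronger hypothesis (e.g., assuming outright that $S$ dominates $S'$), and your analysis, rather than containing a repairable gap, actually exposes a flaw in the paper's unproved claim and in the chain built on it through Corollary~\ref{algor} and Lemma~\ref{dsum}: for instance, with $S=4+8+9$ and $S'=5+6+10$ (again $k=11$, $n=21$) the two clique-unions have \emph{equal} degree sums ($140$ each), contradicting the strict inequality invoked in Case 2.1 of the proof of Theorem~\ref{coecsink}, although the incomparability of the degree sequences needed there can still be recovered from equality of degree sums alone.
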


\textbf{Example:} Consider $n=21$ and the two partitions $S=2+4+5+5+5$ and $S'=4+4+4+4+5$. A sequence of transformations to get from $S'$ to $S$ is as follows:
$$(4-1)+4+4+(4+1)+5=3+4+4+5+5\longrightarrow (3-1)+4+(4+1)+5+5=2+4+5+5+5=S.$$\exend

\begin{cor}\label{algor} Consider two graphs on $n$ vertices $G=K_{h_1}\cup K_{h_2}\cup\cdots\cup K_{h_{\omega}}$ and $G'=K_{h'_1}\cup K_{h'_2}\cup\cdots\cup K_{h'_{\omega}}$ such that $h_1<h'_1$, $h_1\leq h_2\leq\cdots\leq h_{\omega}$, and $2\leq h'_1\leq h'_2\leq\cdots\leq h'_{\omega}$. Then $G$ can be achieved from $G'$ by a sequence of transformations in which a vertex is disconnected from all vertices in one component and completely joined to another component having equal or greater order.\end{cor}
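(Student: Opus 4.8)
The plan is to deduce this corollary directly from the preceding observation on integer partitions, by setting up a dictionary between disjoint unions of cliques and their component-order partitions. The key correspondence is this: a graph that is a disjoint union of $\omega$ cliques $K_{a_1}\cup\cdots\cup K_{a_\omega}$ is determined up to isomorphism by the partition $n=a_1+\cdots+a_\omega$ into $\omega$ positive parts, and conversely each such partition is realized by a unique such graph. Under this dictionary, $G$ and $G'$ correspond to the partitions $S=h_1+\cdots+h_\omega$ and $S'=h'_1+\cdots+h'_\omega$ of $n$, both of length $\omega$, and the hypotheses $h_1<h'_1$, $h_1\le\cdots\le h_\omega$, and $2\le h'_1\le\cdots\le h'_\omega$ are exactly those required by the observation.

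First I would invoke the preceding observation to obtain a sequence of partition transformations carrying $S'$ to $S$, each of which subtracts $1$ from some part of value $p$ and adds $1$ to some other part of value $q\ge p$. The core of the argument is to realize each such elementary move as the graph operation named in the corollary. Assume inductively that the current graph is a disjoint union of $\omega$ cliques whose orders form the current partition, and consider a move that lowers a part of value $p$ and raises a part of value $q\ge p$. In the graph, I would pick a vertex $v$ of the clique $K_p$, delete every edge from $v$ to the other $p-1$ vertices of that component (so it becomes $K_{p-1}$ and $v$ is momentarily isolated), and then add every edge from $v$ to the clique $K_q$ (so it becomes $K_{q+1}$). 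Since $q\ge p$, the component to which $v$ is joined has order at least the order of the component from which $v$ was detached, which is precisely the ``equal or greater order'' requirement. Moreover the result is again a disjoint union of $\omega$ cliques, because deleting a vertex from a clique leaves a clique and completely joining a new vertex to a clique yields a larger clique; hence the inductive hypothesis is restored and the next move is well defined.

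Assembling these steps, applying the graph operations corresponding to the transformation sequence, in order, to $G'$ yields a disjoint union of cliques whose component orders equal $S$, namely $G$; this is the desired conclusion. The one point needing care, which I regard as the main (if modest) obstacle, is the structural invariant that we never try to detach a vertex from a singleton component, since that would delete a component and decrease $\omega$. This is automatic: because the observation keeps the number of parts fixed at $\omega$, every intermediate partition has $\omega$ positive parts, so any part from which we subtract must have value $p\ge 2$; equivalently, every clique from which a vertex is removed has order at least $2$. With this invariant the dictionary is exact, and since the real content of the transformation sequence is already supplied by the observation, nothing further is required.
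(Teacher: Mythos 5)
Your proposal is correct and matches the paper's intent exactly: the paper states this corollary without proof, treating it as the immediate graph-theoretic translation of the preceding observation on partitions, and your dictionary between length-$\omega$ partitions of $n$ and disjoint unions of $\omega$ cliques, with each partition move realized as the vertex-moving operation, is precisely that translation. Your verification of the invariant that no intermediate clique from which a vertex is detached can be a singleton (since a part dropping to $0$ could never be restored and would contradict the fixed length $\omega$) is a careful touch the paper leaves implicit.
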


\textbf{Example:} Consider the two graphs on $n=21$ vertices $G=K_2\cup K_4\cup K_5\cup K_5\cup K_5$ and $G'=K_4\cup K_4\cup K_4\cup K_4\cup K_5$. A sequence of graphs to get from $G'$ to $G$ is as follows:
$$G'=K_4\cup K_4\cup K_4\cup K_4\cup K_5\longrightarrow K_3\cup K_4\cup K_4\cup K_5\cup K_5\longrightarrow K_2\cup K_4\cup K_5\cup K_5\cup K_5=G.$$ \exend

\begin{lem}\label{dsum} Let $G=K_{h_1}\cup K_{h_2}\cup\cdots\cup K_{h_{\omega}}$ and $G'=K_{h'_1}\cup K_{h'_2}\cup\cdots\cup K_{h'_{\omega}}$ with $h_1\leq h_2\leq\cdots\leq h_{\omega}$ and $h'_1\leq h'_2\leq\cdots\leq h'_{\omega}$. Assume $G$ is formed by disconnecting a single vertex from all other vertices in one component of $G'$ and completely joining it to another component of $G'$ having equal or greater order. Then, the degree sum of $G$ is greater than the degree sum of $G'$.\end{lem}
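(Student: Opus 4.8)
The plan is to compute the degree sum of each graph directly from its clique structure and then track how that sum changes under the single transformation. For a disjoint union of complete graphs $H=K_{a_1}\cup\cdots\cup K_{a_m}$, every vertex of $K_{a_i}$ has degree $a_i-1$, so the degree sum of $H$ equals $\sum_{i=1}^{m}a_i(a_i-1)$ (equivalently, twice the edge count $\sum_i\binom{a_i}{2}$). Since $G$ and $G'$ agree on all components except the two involved in the transformation, it suffices to compare the contributions of just those two components.

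Let the moved vertex $v$ be removed from a component of order $a$ and joined to a component of order $b$, where the hypothesis ``equal or greater order'' gives $a\le b$. In $G'$ these two components are $K_a$ and $K_b$; in $G$ they have become $K_{a-1}$ and $K_{b+1}$. I would then compute the net change $\Delta$ in the degree sum as
$$\Delta=\bigl[(a-1)(a-2)+(b+1)b\bigr]-\bigl[a(a-1)+b(b-1)\bigr],$$
which simplifies to $\Delta=2(b-a+1)$. The same value also follows from per-vertex bookkeeping: $v$ changes degree from $a-1$ to $b$; each of the $a-1$ remaining vertices of the source clique loses the single neighbor $v$; and each of the $b$ vertices of the target clique gains $v$ as a neighbor, giving $(b-a+1)-(a-1)+b=2(b-a+1)$.

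Finally, since $a\le b$ we have $b-a+1\ge 1$, hence $\Delta=2(b-a+1)\ge 2>0$, so the degree sum of $G$ exceeds that of $G'$, as claimed. There is no genuine obstacle here beyond care in the bookkeeping: the only points needing attention are correctly identifying that exactly the $a-1$ survivors of the source clique each lose one neighbor while all $b$ vertices of the target clique each gain one, and reading off the inequality direction from the hypothesis that the target component has order at least that of the source. (Within the lemma's setting both $G$ and $G'$ have $\omega$ components, which forces the source order to satisfy $a\ge 2$; note, however, that the formula $\Delta=2(b-a+1)$ is positive even in the degenerate case $a=1$.)
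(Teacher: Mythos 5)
Your proof is correct and follows essentially the same route as the paper: both compute the change in degree sum caused by moving one vertex from a clique of order $a$ to a clique of order $b\geq a$, arriving at the identical net change $2(b-a+1)\geq 2>0$ (the paper writes this as $2(h'_j-h'_i+1)$ inside the full sum rather than isolating the two affected components, but the algebra is the same). Your closing remark about $a\geq 2$ being forced by the component count matching $\omega$ is a nice observation the paper leaves implicit.
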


\begin{proof}
Let $i<j$ be integers such that the vertex moved in $G'$ is disconnected from $K_{h'_i}$ and completely joined to $K_{h'_j}$. Then $h_i=h'_i-1$, $h_j=h'_j+1$, and $h_{\ell}=h'_{\ell}$ for $\ell\neq i,j$. The degree sum of $G'$ is
$$\begin{array}{rcl} \ds{\sum_{\ell=1}^{\omega} h'_{\ell}(h'_{\ell}-1)}&=&\ds{\sum_{\ell=1}^{\omega} h_{\ell}(h_{\ell}-1)-h_i(h_i-1)-h_j(h_j-1)+(h_i+1)h_i+(h_j-1)(h_j-2)}\medskip\\ &=&
\ds{\sum_{\ell=1}^{\omega} h_{\ell}(h_{\ell}-1)-2(h_j-h_i-1)}=\ds{\sum_{\ell=1}^{\omega} h_{\ell}(h_{\ell}-1)-2(h'_j-h'_i+1)}\medskip\\ &\leq& \ds{\sum_{\ell=1}^{\omega} h_{\ell}(h_{\ell}-1)-2}<\ds{\sum_{\ell=1}^{\omega} h_{\ell}(h_{\ell}-1)}, \end{array}$$
where the rightmost side of the inequality is the degree sum of $G$.

\end{proof}

\begin{obs} The edge-maximal graphs $G$ with respect to $\lambda_c^{(k)}(G)<2$ are formed by adding a single edge between two components of the edge maximal graphs $H$ for $\lambda_c^{(k)}(H)=0$, i.e., $H=K_{h_1}\cup K_{h_2}\cup \cdots\cup K_{h_{\omega}}$ such that $\omega=\ceil{n}{k-1}$, $n=h_1+h_2+\cdots+h_{\omega}$, and $h_1\leq h_2\leq\cdots\leq h_{\omega}\leq k-1$.\end{obs}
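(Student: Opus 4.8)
The plan is to establish the stated structure in the forward direction: starting from an edge-maximal $G$, I would peel off its features one at a time, saving the value of $\omega$ for last. Throughout I assume $n\ge k$ (for $n\le k-1$ the graph $G=K_n$ is degenerate and falls outside the $H+e$ description). First I would pin down that $\lambda_c^{(k)}(G)=1$ rather than $0$. By hypothesis $\lambda_c^{(k)}(G)\le 1$, so suppose $\lambda_c^{(k)}(G)=0$. Then every component of $G$ has order at most $k-1$; since $n\ge k$, there are at least two components, and adding a single edge between two of them gives a graph from which deleting that same edge restores all components to order $\le k-1$, so the augmented graph still has $\lambda_c^{(k)}\le 1<2$, contradicting edge-maximality. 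Hence there is an edge $e$ with every component of $G-e$ of order at most $k-1$.

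Next I would show $H:=G-e$ is a disjoint union of cliques and that $e$ bridges two of them. If some component of $H$ were not complete, then adding a missing edge $f$ inside it and then deleting $e$ would again leave all components of order $\le k-1$, forcing $\lambda_c^{(k)}(G+f)\le 1$ and contradicting maximality; so $H=K_{h_1}\cup\cdots\cup K_{h_\omega}$ with $h_1\le\cdots\le h_\omega\le k-1$ and $\sum h_i=n$. The endpoints of $e$ cannot lie in one clique (they would already be adjacent in $H$), so $e$ joins two distinct cliques $K_{h_p},K_{h_q}$, merging them into a single oversized component $D$ of order $h_p+h_q$; note $h_p+h_q\ge k$, since otherwise $D$ would have order $\le k-1$ and $\lambda_c^{(k)}(G)=0$.

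It remains to show $\omega=\ceil{n}{k-1}$. The bound $\omega\ge\ceil{n}{k-1}$ is immediate from $h_i\le k-1$ by pigeonhole, so the content is the reverse inequality, which I would prove by contraposition: assuming $\omega>\ceil{n}{k-1}$ I would exhibit an edge $f\notin E(G)$ with $\lambda_c^{(k)}(G+f)\le 1$. The guiding observation is that $\lambda_c^{(k)}(\Gamma)\le 1$ precisely when $\Gamma$ has at most one component of order $>k-1$ and, if such a component exists, it has a bridge splitting it into two pieces each of order $\le k-1$. I would therefore attach a suitably chosen residual clique $K_{h_m}$ ($m\ne p,q$) to an end of $D$ by a single edge $f$, turning $D$ into a path of cliques $D''$; deleting either the original bridge $e$ (when $h_q+h_m\le k-1$ or $h_p+h_m\le k-1$) or, when a part of order $\le k-2$ can serve as a hinge, an appropriate internal edge, then splits $D''$ into two pieces of order $\le k-1$, giving $\lambda_c^{(k)}(G+f)\le1$. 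That such a recombination always exists when $\omega>\ceil{n}{k-1}$ is exactly the combinatorial content packaged by Lemma~\ref{part} together with the clique-partition transformations of Corollary~\ref{algor} and Lemma~\ref{dsum}: a clique partition with more than $\ceil{n}{k-1}$ parts is never the coarsest, and the single-vertex moves that coarsen it correspond to edge additions that do not raise $\lambda_c^{(k)}$ above $1$. Once $\omega=\ceil{n}{k-1}$ is established, $h_1+h_2\ge k$ follows automatically, since otherwise merging the two smallest cliques would produce a clique partition into fewer than $\ceil{n}{k-1}$ parts.

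The hard part is this last step. The difficulty is that two cliques usually cannot be combined into one clique of order $\le k-1$ (for instance when every part has order $\lceil k/2\rceil$), so the recombination cannot be a naive merge; it must route through a path-like component and exploit a bridge or internal edge whose deletion balances the two sides below $k$. The existence of a usable hinge is what forces attention onto the smallest parts—$\omega>\ceil{n}{k-1}$ guarantees some part has order $\le k-2$—and verifying that the boundary sizes $h_i\in\{1,2\}$ are handled uniformly is the delicate bookkeeping that Corollary~\ref{algor} and Lemma~\ref{dsum} are designed to carry, so I would lean on those lemmas rather than re-deriving the case analysis by hand.
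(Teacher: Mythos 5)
Your steps 1 and 2 are correct, and they are the right way to start: edge-maximality forces $\lambda_c^{(k)}(G)=1$, forces $H=G-e$ to be a disjoint union of cliques of orders at most $k-1$, and forces the two bridged cliques to satisfy $h_p+h_q\geq k$. The gap is your step 3, and it is not a fixable gap: edge-maximality alone does \emph{not} force $\omega=\ceil{n}{k-1}$, so the non-edge $f$ that your contraposition promises does not always exist. Concretely, take $k=5$, $n=8$, and $G=K_2\cup K_3\cup K_3$ plus an edge $e$ joining the two triangles, so $\omega=3>\ceil{8}{4}=2$. Then $\lambda_c^{(5)}(G)=1$, and $G$ is edge-maximal: adding any edge between the $K_2$ and the $6$-vertex component produces an $8$-vertex component whose only bridges split it $2/6$, $5/3$, or $1/7$ (never $4/4$), while adding any further edge between the two triangles produces a bridgeless $6$-vertex component; in every case no single edge deletion leaves all components of order at most $4$, so $\lambda_c^{(5)}(G+f)\geq 2$ for every non-edge $f$. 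Your own worry at the end --- that two cliques usually cannot be recombined through a hinge --- was exactly on target: when the residual parts are too large relative to $k$, no edge addition stays below $2$, and your contrapositive is simply false.

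The lemmas you lean on cannot close this, because they speak the wrong language: the transformations of Corollary~\ref{algor} and Lemma~\ref{dsum} move a whole vertex between cliques, which deletes edges as well as adds them, so they can never witness a failure of edge-maximality; and Lemma~\ref{part} is purely a majorization statement about degree sequences. That is in fact how the clause $\omega=\ceil{n}{k-1}$ is actually justified in the paper, which states this Observation without proof: exactly as in the vertex case, where Observation~\ref{obs1} only yields $\omega\geq\ceil{n-s+1}{k-1}$ and Lemma~\ref{part} shows that partitions with more parts give majorized (hence non-sink) degree sequences, the restriction to $\omega=\ceil{n}{k-1}$ here comes from the sink requirement feeding Theorem~\ref{coecsink}, not from edge-maximality. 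If $\omega>\ceil{n}{k-1}$, then $\pi(H+e)$ is majorized by some $\pi(H'+e')$ with exactly $\ceil{n}{k-1}$ parts (Lemma~\ref{part}, adjusted for the two degrees raised by the added edge); in the example above, $\pi(G)=1^2 2^4 3^2$ is majorized by $\pi(K_4\cup K_4+e)=3^6 4^2$, so $\pi(G)$ is discarded even though $G$ is edge-maximal. So the correct argument keeps your steps 1--2 verbatim and replaces step 3 by this majorization step; a proof of step 3 from edge-maximality, as you propose, cannot exist.
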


\begin{thm}\label{coecsink} Let $k\geq 2$ and let $G$ be a graph formed by adding a single edge between two components of an edge maximal graph $H$ with respect to $\lambda_c^{(k)}(H)=0$. Then, $\pi(G)$ is a sink for $k$-component order $2$-edge connected, except when $k\geq 4$ and the following are all true: $n\equiv 1\bmod{(k-1)}$, the smallest component of $H$ is $K_2$, and the edge added to $H$ has at most one endpoint in a copy of $K_{k-1}$.
 \end{thm}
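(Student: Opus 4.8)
The plan is to decide when $\pi(G)$ is a maximal element of $\overline{P_n}$ for $P=$``$k$-component order $2$-edge connected''. Since every sequence in $\overline{P_n}$ is majorized by the degree sequence of some edge-maximal graph, and by the preceding Observation every edge-maximal graph has the form $H'+e'$ (a greedy clique partition plus one extra edge), the sink question reduces to a purely arithmetic comparison: $\pi(G)=\pi(H+e)$ is a sink if and only if no $\pi(H'+e')$ \emph{strictly} majorizes it. The first step is to record that the degree sum of any $H''+e''$ equals $\sum_i (h_i'')^2-n+2$; because majorization here is pointwise domination of the sorted sequences, a pointwise inequality together with equal degree sums forces equality. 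Hence a strict majorizer $H'+e'$ must satisfy $\sum_i (h_i')^2>\sum_i h_i^2$, i.e. its clique partition is strictly ``more unbalanced''.

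Next I would reparametrize by \emph{deficits}. Writing $h_i=(k-1)-\delta_i$, the requirement $\omega=\ceil{n}{k-1}$ forces $D:=\sum_i\delta_i=\omega(k-1)-n\le k-2$, and $\sum_i h_i^2$ equals a constant (in $n,k$) plus $\sum_i\delta_i^2$; thus ``more unbalanced'' means the deficit partition of $D$ is more concentrated, and the greedy partition $H^\ast=K_{(k-1)-D}\cup(\omega-1)K_{k-1}$ (deficit $(D)$) is its unique maximum. Crucially, $n\equiv 1\pmod{k-1}$ is exactly $D=k-2$, and ``smallest component $K_2$'' is exactly the deficit partition $(k-3,1)$. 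In these coordinates I would derive a clean majorization test from the tail counts $N_{\le v}$: with $f_H(u)=\sum_{\delta_i\ge u}(k-1-\delta_i)$ counting vertices of degree at most $k-2-u$, adding $e$ between components of deficits $p,q$ decreases $N_{\le v}$ by one at each level $v=k-2-p$ and $v=k-2-q$. The relation $\pi(H'+e')\ge\pi(H+e)$ then becomes the conjunction of: (T) the number of deficit-$0$ endpoints of $e'$ is at least that of $e$; and, for every $u\ge 1$, $f_{H'}(u)-[\text{bumps of }e'\text{ at level }u]\le f_H(u)-[\text{bumps of }e\text{ at level }u]$.

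For the exceptional case I would exhibit an explicit majorizer: take $H'=H^\ast$ with deficit $(k-2)$ and let $e'$ join the $K_1$ to a $K_{k-1}$. Here $f_{H^\ast}(u)=1$ for $1\le u\le k-2$, and a direct check of the criterion (the hypothesis that $e$ has at most one deficit-$0$ endpoint gives (T), and the level-$u$ inequalities hold termwise) yields $\pi(H^\ast+e')\ge\pi(H+e)$; since $\sum_i(\delta_i')^2=(k-2)^2>(k-3)^2+1=\sum_i\delta_i^2$ exactly when $k\ge4$, the majorization is strict and $\pi(G)$ is not a sink. For the converse I would show no strictly-more-concentrated $H'$ passes the criterion. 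Two observations dispose of the easy cases: the only deficit partition of $D$ strictly more concentrated than $(k-3,1)$ is $(k-2)$ itself, so in the $K_2$ case it suffices to rule out the two edge placements of $H^\ast$ (when $e$ joins two $K_{k-1}$'s, condition (T) forces $e'$ to leave the $K_1$ unbumped, producing a degree-$0$ vertex that fails at the bottom level); and when $D<k-2$ the greedy partition is the unique one of minimum smallest-part, so it always contributes a vertex of degree below the minimum degree of $H+e$ and again fails at the bottom.

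The main obstacle is the general sink direction when $D=k-2$ with smallest component of order $\ge 3$ (all deficits capped at $k-4$): here a more-concentrated $H'$ may share the minimum degree of $H$, so the crude minimum-degree comparison is inconclusive and one must track the \emph{multiplicities} of low-degree vertices. The key quantitative point is that, subject to $\sum_i\delta_i=D\le k-2$, concentrating the deficit either lowers the minimum degree or increases the number of minimum-degree vertices beyond what a single edge (which bumps at most two vertices) can repair; making this precise at the critical level $u=\delta_{\max}(H')$ through the inequality $f_{H'}(u)\le f_H(u)+2$, and verifying it is violated in every non-exceptional configuration, is the technical heart. I expect this to require a short case analysis on $\delta_{\max}(H')$ versus $\delta_{\max}(H)$ and on the number of maximum-deficit parts, all controlled by the bound $D\le k-2$.
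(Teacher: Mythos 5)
Your overall architecture is sound and, in its first half, tracks the paper closely: the reduction of the sink question to pairwise comparisons among sequences $\pi(H'+e')$, the observation that equal degree sums preclude strict majorization, and the degree-sum monotonicity under concentration moves are exactly the content of the paper's Lemma~\ref{dsum} and Corollary~\ref{algor} (your $\sum_i (h_i'')^2-n+2$ computation is the same fact in deficit coordinates), and your explicit majorizer for the exceptional case ($H^\ast=K_1\cup(\omega-1)K_{k-1}$ with $e'$ joining the $K_1$ to a $K_{k-1}$, checked against every $e$ with at most one endpoint in a $K_{k-1}$) is precisely the paper's final paragraph of Case~2.3, and your identification of $(k-2)$ as the unique deficit partition of $D=k-2$ with larger square-sum than $(k-3,1)$ is correct. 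The genuine gap is in the sink direction, which you explicitly defer as ``the technical heart'': you never carry out the case analysis, and the clinching criterion you propose --- that $f_{H'}(u)\le f_H(u)+2$ is \emph{violated} in every non-exceptional configuration --- is false as stated. Take $k=5$, $H=K_3\cup K_3\cup mK_4$ (deficits $(1,1)$, so $D=2<k-2$) against the greedy $H'=K_2\cup(m+1)K_4$ (deficit $(2)$): then $f_{H'}(2)=2\le f_H(2)+2=2$ and $f_{H'}(1)=2\le f_H(1)+2$, so your aggregate inequality holds at every level, yet majorization fails only because $e'$ has at most one endpoint in the $K_2$, leaving an unbumped vertex of degree $1<2=d_1(H+e)$. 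In other words, the slack ``a single edge bumps at most two vertices'' must be sharpened to ``at most one vertex per component,'' and your level-count criterion does not encode this; your parenthetical bottom-level remark uses it ad hoc for the greedy partition but your stated test would let such configurations through. Your scoping is also off: the multiplicity analysis is not confined to $D=k-2$ with smallest component of order $\ge 3$, since same-maximum-deficit concentrations (e.g.\ $(3,2,1)$ versus $(3,3)$) occur for every $D$, and your remark that ``the greedy partition is the unique one of minimum smallest-part'' disposes of the greedy partition only, not of these intermediate candidates.

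For contrast, the paper avoids the level-count machinery entirely. It compares $G=H+e$ and $G'=H'+f$ pairwise: when $h_1,h_1'\ge 2$ it aligns the two clique partitions at the first differing component $j+1$ (WLOG $h_{j+1}<h_{j+1}'$), and since every component of order at least $2$ retains an unbumped vertex, $d_{\ell+1}=h_{j+1}-1<h_{j+1}'-1=d_{\ell+1}'$ with $\ell=h_1+\cdots+h_j$ blocks the more concentrated sequence from majorizing the less concentrated one, while the degree-sum comparison (Lemma~\ref{dsum}) blocks the reverse; the delicate single-bump bookkeeping is thereby confined to Case~2.3, where $h_1=1$ (i.e.\ $n\equiv 1\bmod(k-1)$) and the $K_1$ can be bumped. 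If you want to complete your route, you would need per-component refinements of your level inequalities (tracking, for each small component, how many of its vertices an edge can bump), at which point the first-differing-index argument is both shorter and already does the job; as it stands, your proposal establishes the exceptional (non-sink) direction but not the sink direction, so it does not yet prove the theorem.
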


\begin{proof}
Consider two graphs $H=K_{h_1}\cup K_{h_2}\cup \cdots\cup K_{h_{\omega}}$ and $H'=K_{h'_1}\cup K_{h'_2}\cup \cdots\cup K_{h'_{\omega}}$ such that $\omega=\ceil{n}{k-1}$, $n=h_1+h_2+\cdots+h_{\omega}=h'_1+h'_2+\cdots+h'_{\omega}$, $h_1\leq h_2\leq\cdots\leq h_{\omega}\leq k-1$, and $h'_1\leq h'_2\leq\cdots\leq h'_{\omega}\leq k-1$. Define $G:=H+e$ and $G':=H'+f$ where $e,f$ are single edges. Also, let $\pi:=\pi(G)=(d_1,d_2,\ldots,d_{n})$ and $\pi':=\pi(G')=(d'_1,d'_2,\ldots,d'_{n})$. If $k=2$, then $H=H'=nK_1$ and $G=G'=(n-2)K_1\cup K_2$. So, there is only one maximal sequence $\pi(G)=0^{n-2}1^2$, and therefore it is a sink. For the remainder of the proof, we will assume $k\geq 3$.

\medskip

\noindent\textit{Case 1}. $H=H'$ and $G\neq G'$.

\medskip

Note that adding a single edge to a graph increases the degree sum by exactly 2. Thus, the degree sum of $G$ and the degree sum of $G'$ are the same. So, neither $\pi$ nor $\pi'$ can majorize the other.

\medskip

\noindent\textit{Case 2}. $H\neq H'$.

\medskip

\noindent\textit{Case 2.1}. $2\leq h_1< h'_1$.

\medskip

By Corollary~\ref{algor} and Lemma~\ref{dsum}, the degree sum of $G$ is greater than the degree sum of $G'$. Thus, $\pi'$ can not majorize $\pi$. Also, $d_1=h_1-1<h'_1-1=d'_1$, so $\pi$ can not majorize $\pi'$.

\medskip

\noindent \textit{Case 2.2}. $2\leq h_1=h'_1$.

\medskip

Let $j\geq 1$ be the first index such that $h_{j+1}\neq h'_{j+1}$, so that $h_i=h'_i$ for $i=1,\ldots,j$. Assume, without loss of generality, that $h_{j+1}<h'_{j+1}$. Define $H_1:=H-\sum_{i=1}^{j} K_{h_i}$ and $H_2:=H'-\sum_{i=1}^{j} K_{h'_i}$, i.e., the graphs that result from removing the first $j$ matching components of $H$ and $H'$, respectively. Since $h_{j+1}<h'_{j+1}$, by Corollary~\ref{algor} and Lemma~\ref{dsum}, the degree sum of $H_2$ is less than the degree sum of $H_1$. Thus, the degree sum of $G'$ is less than the degree sum of $G$, and $\pi'$ can not majorize $\pi$. Also, if $\ell=h_1+h_2+\cdots+h_j$, then $d_{\ell+1}=h_{j+1}-1<h'_{j+1}-1=d'_{\ell+1}$, so $\pi$ can not majorize $\pi'$.

\medskip

\noindent\textit{Case 2.3}. $1=h_1\leq h'_1$.

\medskip

Since $h_1+h_2\geq k$, the only edge-maximal graph $H$ with $h_1=1$ and $\lambda_c^{(k)}(H)=0$ is $H=K_1\cup K_{k-1}\cup\cdots\cup K_{k-1}$, which occurs when $n\equiv 1\bmod(k-1)$. If $h_1=h'_1=1$, then $G$ and $G'$ have the same degree sum and neither $\pi$ nor $\pi'$ majorize the other.

Assume $h_1<h'_1$, then, by Lemma~\ref{dsum}, $\pi'$ can not majorize $\pi$. If $h'_1\geq 3$, then $d_1\leq 1<2\leq h'_1-1=d'_1$, and $\pi$ can not majorize $\pi'$. Next, assume $h'_1=2$. Let the edge $f$ of $G'$ have endpoints in two copies of $K_{k-1}$. If the edge $e$ of $G$ has endpoints in $K_1$ and $K_{k-1}$, then $d_{n-1}=k-2<k-1=d'_{n-1}$, and $\pi$ can not majorize $\pi'$. If instead, $e$ has endpoints in two copies of $K_{k-1}$, then $d_1=0<1=d'_1$, and $\pi$ can not majorize $\pi'$.

Finally, assume $k\geq 4$, $h'_1=2$, and the edge $f$ of $G'$ has endpoints such that at most one is in a copy of $K_{k-1}$. Let the edge $e$ of $G$ have endpoints in $K_1$ and a copy of $K_{k-1}$. Note that $h'_2\neq k-1$, otherwise $n\equiv 2\bmod(k-1)$ instead of $n\equiv 1\bmod(k-1)$. Thus, $h'_2<k-1$. It follows that $d'_1=1=d_1$, $d'_2=h'_2-1< k-2=d_2$, $d'_i\leq k-2=d_i$ for $3\leq i\leq n-1$, and $d'_n\leq k-1=d_n$. So, $\pi$ majorizes $\pi'$, which implies $\pi'$ is not a sink.

\end{proof}

If we generate the sinks according to Theorem~\ref{coecsink} when $k=3,4,$ or 5, we get the following results.

\begin{cor} Let $\pi=(d_{1}\leq \dots \leq d_{n})$ for $n\geq 3$. If

\begin{enumerate}

\item $n$ is even and $d_{n-2}\leq 1\,\Rightarrow\,d_n\geq 3$, or

\item $n$ is odd and $\left[(d_{n-1}\leq 1) \lor (d_1= 0\land d_{n-2}\leq 1)\right]\, \Rightarrow\, d_n\geq 3$,

\end{enumerate}

then $\pi$ is forcibly $3$-component order $2$-edge connected.\end{cor}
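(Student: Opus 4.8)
The plan is to specialize Theorem~\ref{coecsink} to the case $k=3$, read off the resulting sinks, and then form the conjunction $\bigwedge_{\pi\in S(P,n)}C(\pi)$ of their blocking Chv\'atal-type conditions, which by the framework of Section~\ref{s2} is automatically the best monotone theorem. So the entire argument reduces to (a) listing the sinks and (b) translating and combining their $C(\pi)$.

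First I would use the observation immediately preceding Theorem~\ref{coecsink} to pin down the edge-maximal graphs $H$ with $\lambda_c^{(3)}(H)=0$. With $k-1=2$ we have $\omega=\ceil{n}{2}$ and every part $h_j\le 2$. When $n$ is even this forces $\omega=n/2$ and $h_1=\cdots=h_\omega=2$, i.e.\ $H=\frac{n}{2}K_2$; when $n$ is odd it forces exactly one part equal to $1$ and the rest equal to $2$, i.e.\ $H=K_1\cup\frac{n-1}{2}K_2$. In both cases $h_1+h_2\ge 3=k$, so $H$ is genuinely edge-maximal. Since the exceptional case of Theorem~\ref{coecsink} requires $k\ge 4$, for $k=3$ every $G=H+e$ is a sink, and it remains only to list the distinct degree sequences produced by inserting one edge.

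Next I would enumerate these. For $n$ even the only components are copies of $K_2$, so any added edge merges two of them into a $P_4$, giving the single sink $G=P_4\cup\frac{n-4}{2}K_2$ with $\pi=1^{n-2}2^2$. For $n$ odd there are two inequivalent placements: joining the $K_1$ to a $K_2$ creates a $P_3$ and yields $\pi_a=1^{n-1}2^1$, while joining two copies of $K_2$ creates a $P_4$ and leaves the isolated vertex intact, yielding $\pi_b=0^1 1^{n-3}2^2$. Forming $C(\pi)$ for each sink degree sequence (collapsing repeated target values and discarding any impossible $d_i\ge n$, as prescribed in Section~\ref{s2}) gives $d_{n-2}\le 1\Rightarrow d_n\ge 3$ from the even sink, $d_{n-1}\le 1\Rightarrow d_n\ge 3$ from $\pi_a$, and $d_1=0\wedge d_{n-2}\le 1\Rightarrow d_n\ge 3$ from $\pi_b$. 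The even case is then exactly condition~(1).

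The one place requiring care is the odd case, where the best monotone condition is the conjunction $C(\pi_a)\wedge C(\pi_b)$, and I would verify that it collapses into the single implication of condition~(2). Writing each implication in the form $\neg(\text{hyp})\vee(d_n\ge 3)$ and factoring out the common consequent $d_n\ge 3$, the conjunction holds precisely when $d_n\ge 3$ or both hypotheses fail, i.e.\ when $[(d_{n-1}\le 1)\vee(d_1=0\wedge d_{n-2}\le 1)]\Rightarrow d_n\ge 3$, which is the stated form. The main obstacle is therefore not any hard estimate but bookkeeping: confirming that the edge-placement enumeration is exhaustive (so that no sink is missed) and carrying out the Boolean simplification of $C(\pi_a)\wedge C(\pi_b)$ correctly.
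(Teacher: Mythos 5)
Your proposal is correct and follows exactly the route the paper intends: the corollary is obtained by specializing Theorem~\ref{coecsink} to $k=3$ (where the exceptional case cannot occur), enumerating the sinks $1^{n-2}2^2$ for even $n$ and $1^{n-1}2^1$, $0^1 1^{n-3}2^2$ for odd $n$, and conjoining their simplified Chv\'atal-type conditions $C(\pi)$ as prescribed in Section~\ref{s2}. Your enumeration of edge placements is exhaustive, and your Boolean collapse of $C(\pi_a)\wedge C(\pi_b)$ into the single implication of condition~(2) is exactly right.
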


\begin{cor} Let $\pi=(d_{1}\leq \dots \leq d_{n})$ for $n\geq 4$.
Define $i$ so that $n\equiv i\bmod{3}$, with $0\leq i\leq 2$. If

\begin{enumerate}

\item $i=0$ and $d_{n-2}\leq 2 \,\Rightarrow\,d_n\geq 4$, or

\item $i=1$ and

$\begin{array}{l}\left[(d_1= 0 \land d_{n-2}\leq 2)\lor (d_1\leq 1\land d_{n-1}\leq 2)\lor
(d_4\leq 1 \land d_{n-2}\leq 2)\right]\Rightarrow d_n\geq 4,\,or\end{array}$

\item $i=2$ and $\left[(d_1\leq 1\land d_{n-1}\leq 2)\lor (d_2\leq 1 \land d_{n-2}\leq 2)\right] \Rightarrow d_n\geq 4$,

\end{enumerate}

then $\pi$ is forcibly $4$-component order $2$-edge connected.\end{cor}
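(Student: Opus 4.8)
The plan is to read the sinks straight off Theorem~\ref{coecsink} for $k=4$, organize them by the residue $i$ of $n$ modulo $k-1=3$, and translate each sink into its simplified Chv\'atal-type condition. Recall that the edge-maximal graphs $H$ with $\lambda_c^{(4)}(H)=0$ are the disjoint unions $K_{h_1}\cup\cdots\cup K_{h_\omega}$ with $\omega=\lceil n/3\rceil$, each $h_j\le 3$, and $h_1+h_2\ge 4$. Since all $\omega$ parts are at most $3$, the quantity $3\omega-n\in\{0,1,2\}$ is a ``deficit'' that must be absorbed by lowering some parts below $3$, and the constraints $h_j\ge 1$ and $h_1+h_2\ge 4$ pin down all admissible $H$ from this deficit alone.

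Carrying this out, when $i=0$ the deficit is $0$, forcing $H=(n/3)K_3$; when $i=2$ the deficit is $1$, forcing $H=K_2\cup(\omega-1)K_3$; and when $i=1$ the deficit is $2$, yielding exactly the two graphs $H_a=K_1\cup(\omega-1)K_3$ and $H_b=2K_2\cup(\omega-2)K_3$. For each such $H$ I would list the essentially distinct single edges joining two components and invoke Theorem~\ref{coecsink} to decide which additions are sinks. The step I expect to be the main obstacle is the $i=1$ case: here $n\equiv 1\pmod{k-1}$, and $H_b$ has smallest component $K_2$, so the exclusion clause of Theorem~\ref{coecsink} discards every edge addition to $H_b$ with at most one endpoint in a copy of $K_3$, leaving only the $K_3$--$K_3$ addition as a genuine sink. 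I must apply this clause carefully, both to avoid overcounting and to trust (as the theorem guarantees) that the discarded sequences are majorized by the surviving ones; meanwhile the exclusion does not touch $H_a$, whose smallest component is $K_1$, so both edge types on $H_a$ survive.

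Computing the degree sequences is then routine: a $K_3$--$K_3$ addition creates two vertices of degree $3$, while a $K_1$-- or $K_2$--$K_3$ addition creates a single vertex of degree $3$, and all other vertex degrees are unaffected. This produces the sink $2^{n-2}3^2$ for $i=0$; the three sinks $0^{1}2^{n-3}3^{2}$, $1^{1}2^{n-2}3^{1}$, and $1^{4}2^{n-6}3^{2}$ for $i=1$; and the two sinks $1^{1}2^{n-2}3^{1}$ and $1^{2}2^{n-4}3^{2}$ for $i=2$. For each sink $\pi$ I would write $C(\pi)$, collapsing runs of equal entries as described in Section~\ref{s2}; for example $2^{n-2}3^2$ gives $d_{n-2}\le 2\Rightarrow d_n\ge 4$ and $1^{4}2^{n-6}3^{2}$ gives $d_4\le 1\wedge d_{n-2}\le 2\Rightarrow d_n\ge 4$.

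Finally, the largest degree in each sink is $3$, so every $C(\pi)$ carries the identical consequent $d_n\ge 4$. A conjunction of implications sharing one consequent is equivalent to the single implication whose antecedent is the disjunction of the individual antecedents, so $\bigwedge_\pi C(\pi)$ collapses, within each residue class, to exactly the displayed condition. (For the smallest values of $n$ in each class a few of these sinks degenerate or coincide—for instance $H_b$ may contain fewer than two copies of $K_3$—which should be checked separately, but the generic pattern is as stated.) This completes the proof.
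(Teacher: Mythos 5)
Your proposal is correct and follows essentially the same route as the paper, which justifies this corollary purely by enumerating the partitions of $n$ into $\lceil n/3\rceil$ parts of size at most $3$ (with smallest two parts summing to at least $4$) for each residue class and applying Theorem~\ref{coecsink}'s exclusion clause to discard the $K_2$--$K_2$ and $K_2$--$K_3$ edge additions to $2K_2\cup(\omega-2)K_3$ in the $i=1$ case; your sink lists $2^{n-2}3^2$; $0^{1}2^{n-3}3^{2}$, $1^{1}2^{n-2}3^{1}$, $1^{4}2^{n-6}3^{2}$; and $1^{1}2^{n-2}3^{1}$, $1^{2}2^{n-4}3^{2}$ are exactly right, and the collapse of the conjunction $\bigwedge C(\pi)$ into one implication with a disjunctive antecedent is valid. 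The only blemish is the throwaway remark that a $K_1$-- or $K_2$--$K_3$ addition leaves ``all other vertex degrees unaffected'' (the endpoint in the small component also gains a degree), but your stated degree sequences already account for this correctly.
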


\begin{cor} Let $\pi=(d_{1}\leq \dots \leq d_{n})$ for $n\geq 9$.
Define $i$ so that $n\equiv i\bmod{4}$, with $0\leq i\leq 3$. If

\begin{enumerate}

\item $i=0$ and $d_{n-2}\leq 3 \Rightarrow d_n\geq 5$, or

\item $i=1$ and

$\begin{array}{l}\left[(d_1= 0 \land d_{n-2}\leq 3)\lor (d_1\leq 1\land d_{n-1}\leq 3)\lor
(d_2\leq 1 \land d_5\leq 2 \land d_{n-2}\leq 3) \right. \medskip \\ \left.\lor (d_8\leq 2\land d_{n-1}\leq 3) \lor (d_9\leq 2\land d_{n-2}\leq 3)\right]\Rightarrow d_n\geq 5\end{array}$

and $d_7\leq 2 \Rightarrow d_n\geq 4$, or

\item $i=2$ and

$\begin{array}{l}\left[(d_1\leq 1 \land d_2\leq 2 \land d_{n-1}\leq 3)\lor (d_2\leq 1\land d_{n-2}\leq 3)\lor
(d_5\leq 2 \land d_{n-1}\leq 3) \right. \medskip \\ \left.\lor (d_6\leq 2\land d_{n-2}\leq 3)\right]\Rightarrow d_n\geq 5\end{array}$

and $d_4\leq 2 \Rightarrow d_n\geq 4$, or

\item $i=3$ and $\left[(d_2\leq 2 \land d_{n-1}\leq 3)\lor (d_3\leq 2 \land d_{n-2}\leq 3)\right] \Rightarrow d_n\geq 5$,

\end{enumerate}

then $\pi$ is forcibly $5$-component order $2$-edge connected.\end{cor}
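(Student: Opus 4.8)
The plan is to apply the best-monotone-theorem framework of Section~\ref{s2}: the best monotone theorem for a property $P$ on $n$ vertices has degree condition $\bigwedge_{\pi\in S(P,n)}C(\pi)$, so it suffices to list every sink $\pi$ for ``$5$-component order $2$-edge connected'', record its simplified Chv\'atal-type blocker $C(\pi)$, and assemble the conjunction. Taking $k=5$ (so $k-1=4$), the observation immediately preceding Theorem~\ref{coecsink} says that every edge-maximal graph for $\lambda_c^{(5)}<2$ is $H+e$, where $H=K_{h_1}\cup\cdots\cup K_{h_{\omega}}$ is edge-maximal for $\lambda_c^{(5)}=0$ (so $\omega=\ceil{n}{4}$, each $h_j\le4$, and $h_1+h_2\ge5$) and $e$ joins two components of $H$; Theorem~\ref{coecsink} then tells us which of the sequences $\pi(H+e)$ are genuine sinks.

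First I would enumerate, for each residue $i=n\bmod4$, the admissible graphs $H$. Since $\omega=\ceil{n}{4}$ forces $4\omega-n\in\{0,1,2,3\}$, the cliques of $H$ must be as close to $K_4$ as possible, and working out the partitions subject to $h_1+h_2\ge5$ gives: $H=(n/4)K_4$ when $i=0$; $H=K_3\cup(\omega-1)K_4$ when $i=3$; $H=K_2\cup(\omega-1)K_4$ or $H=2K_3\cup(\omega-2)K_4$ when $i=2$; and $H=K_1\cup(\omega-1)K_4$, $H=K_2\cup K_3\cup(\omega-2)K_4$, or $H=3K_3\cup(\omega-3)K_4$ when $i=1$. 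For each such $H$ the essentially distinct edges $e$ are indexed by the unordered pair of orders of the two components they join. I would then apply the exceptional clause of Theorem~\ref{coecsink}, which for $k=5$ is triggered only when $i=1$, the smallest component of $H$ is $K_2$, and $e$ has at most one endpoint in a $K_4$; this deletes exactly the $K_2$-$K_3$, $K_2$-$K_4$, and $K_3$-$K_4$ additions to $H=K_2\cup K_3\cup(\omega-2)K_4$, leaving only its $K_4$-$K_4$ addition, while every other $H+e$ survives as a sink.

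The remaining step is mechanical. Joining $K_{h_a}$ to $K_{h_b}$ by $e$ raises exactly two degrees of $H$ by one, so each sink's degree sequence is immediate; its maximum degree is $4$ unless the two joined components are both copies of $K_3$, in which case $\pi$ has the form $2^{a}3^{b}$ with maximum degree $3$. For each $\pi$ I would write $C(\pi)$ in simplified form by collapsing each maximal block of equal values to a single lower bound at its last index and discarding the redundant ``$d_i\ge n$'' clause; the consequent is then $d_n\ge5$ for every sink of maximum degree $4$ and $d_n\ge4$ for each $2^{a}3^{b}$ sink. Finally, grouping these implications by their shared consequent and disjoining the antecedents reproduces the four displayed cases: the $K_3$-$K_3$ additions supply the auxiliary conditions $d_7\le2\Rightarrow d_n\ge4$ (for $i=1$, from $3K_3\cup(\omega-3)K_4$) and $d_4\le2\Rightarrow d_n\ge4$ (for $i=2$, from $2K_3\cup(\omega-2)K_4$), and all other survivors furnish the disjuncts of the $d_n\ge5$ implications.

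The argument is pure bookkeeping, and the chief obstacle is carrying it out without slips: one must enumerate the partitions and edge-types exhaustively, invoke the exception of Theorem~\ref{coecsink} in precisely the one residue class $i=1$ where it applies, and simplify every $C(\pi)$ correctly. A secondary issue is the boundary: the hypothesis $n\ge9$ should be used to ensure that each listed partition is realizable (in particular that enough copies of $K_4$ are present to admit a $K_4$-$K_4$ edge), that every index $d_1,\dots,d_9$ named in the statement actually occurs, and that distinct pairs $(H,e)$ yield pairwise incomparable degree sequences, so that none of the resulting conditions is redundant.
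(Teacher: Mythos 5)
Your proposal is correct and takes essentially the same route as the paper, which derives this corollary purely by ``generating the sinks according to Theorem~\ref{coecsink}'' for $k=5$: enumerate the edge-maximal graphs $H=K_{h_1}\cup\cdots\cup K_{h_\omega}$ with $\omega=\Ceil{n/4}$, parts at most $4$, and $h_1+h_2\ge 5$ in each residue class, add one inter-component edge, discard the exceptional $K_2\cup K_3\cup(\omega-2)K_4$ cases when $n\equiv1\bmod 4$, and write the simplified Chv\'atal-type blockers $C(\pi)$. Your partition enumeration, your application of the exception (leaving only the $K_4$--$K_4$ edge for that $H$), and your identification of the $2^a3^b$ sinks from $K_3$--$K_3$ edges as the auxiliary $d_7\le2\Rightarrow d_n\ge4$ and $d_4\le2\Rightarrow d_n\ge4$ conditions all match the paper's intended computation.
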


There is a notable increase in the number of sinks from $k$-component order $1$-edge connected to $k$-component order $2$-edge connected. In fact, we have the following result.

\begin{thm}\label{sinkno}
  Let $k,s\geq 3$, and let $n\geq 2(s-1)$. Then there are at least $p(s-1)$ sinks for the property of $k$-component order $s$-edge connected, where~$p$ denotes
  the integer partition function, so that
  $p(z)\sim\dfrac{1}{4\sqrt{3}z}e^{\pi\sqrt{\frac{2z}{3}}}$.
\end{thm}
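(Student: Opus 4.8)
Theorem~\ref{sinkno} asserts a lower bound of $p(s-1)$ on the number of sinks for $k$-component order $s$-edge connected. The plan is to exhibit an injection from the set of integer partitions of $s-1$ into the set of distinct sinks (degree sequences of edge-maximal graphs with respect to $\lambda_c^{(k)}(\cdot)\le s-1$), thereby producing at least $p(s-1)$ distinct sinks. The asymptotic formula for $p$ is the classical Hardy--Ramanujan estimate and may simply be quoted.

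First I would fix a convenient ``base'' graph that realizes a sink, using the structure established earlier for $s$-edge connectivity. The natural candidate is a disjoint union of cliques each of order at most $k-1$ (as in the Observations preceding Theorem~\ref{coecsink}), to which one adds $s-1$ edges so that the resulting graph is edge-maximal with $\lambda_c^{(k)}\le s-1$. Since $k\ge 3$ and $n\ge 2(s-1)$, there is room to place the cliques so that the $s-1$ added edges can be distributed in a controlled way. Concretely, for each partition $\mu=(\mu_1\ge\mu_2\ge\cdots\ge\mu_r)$ of $s-1$, I would build a graph $G_\mu$ in which the $s-1$ ``connecting'' edges are grouped into $r$ bundles, the $i$-th bundle consisting of $\mu_i$ parallel connections arranged between a fresh pair (or small cluster) of clique-vertices. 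The idea is that the multiset of local degree increments produced by a bundle of size $\mu_i$ records the part $\mu_i$, so that the sorted degree sequence $\pi(G_\mu)$ encodes the partition $\mu$.

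The two technical obligations are then: (i) to verify that each $G_\mu$ is genuinely a sink, i.e.\ edge-maximal subject to $\lambda_c^{(k)}\le s-1$ and maximal in the majorization order among non-forcibly-$P$ sequences; and (ii) to verify that $\mu\mapsto\pi(G_\mu)$ is injective, i.e.\ distinct partitions of $s-1$ yield distinct degree sequences. For (i) I would argue exactly as in Theorem~\ref{coecsink}: the underlying clique-union is edge-maximal with $\lambda_c^{(k)}=0$ by the Observation that forces $\omega=\ceil{n}{k-1}$ and $h_1+h_2\ge k$, and adding the $s-1$ prescribed edges keeps every component of order $\le k-1$ after deleting any $\le s-1$ edges while destroying this property after one more edge; the condition $n\ge 2(s-1)$ guarantees enough vertices to host $s-1$ edges without forcing unintended merges. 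For (ii) I would show that from $\pi(G_\mu)$ one can read off, among the high-degree vertices, the sizes of the edge-bundles, and that these sizes recover the parts $\mu_i$ uniquely.

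The main obstacle is step (ii): ensuring that the degree-sequence encoding of a partition is truly injective and that the constructed sequences are pairwise non-majorizing so that each is an actual sink rather than being dominated by another. Lemma~\ref{sumaj} already shows that distinct partitions into clique-orders give non-majorizing sequences, and the analogous incomparability for the augmented graphs is exactly the content of the case analysis in Theorem~\ref{coecsink}; the delicate point is to design the bundle construction so that no two partitions $\mu\neq\mu'$ collapse to the same multiset of degrees after sorting, and so that none of the resulting sinks is accidentally majorized by a sink coming from a coarser partition. I would handle this by choosing the host cliques for different bundles to be vertex-disjoint and of distinct, carefully separated orders, so that the degree contributed by the $i$-th bundle lands in a degree ``band'' that is disjoint from the bands of the other bundles; this separation makes the partition $\mu$ recoverable from $\pi(G_\mu)$ and rules out majorization between images, completing the injection and hence the bound $|S(P,n)|\ge p(s-1)$.
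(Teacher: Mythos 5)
Your overall strategy---an injection from the partitions of $s-1$ into sinks obtained by adding $s-1$ edges to an edge-maximal union of cliques---is the same as the paper's, but the concrete encoding you propose has a genuine gap, in two respects. First, a ``bundle of $\mu_i$ parallel connections between a fresh pair of clique-vertices'' is not available in a simple graph, and the natural simple-graph interpretations behave very differently: if the $\mu_i$ edges of a bundle form a matching between two cliques, every endpoint's degree increases by exactly $1$, so the sorted degree sequence records only $\sum_i \mu_i = s-1$, and all partitions of $s-1$ collapse to the \emph{same} sequence. Injectivity requires each bundle to be a star. The paper fixes $H = K_r \cup 2mK_{k-1}$, splits the $2m$ copies of $K_{k-1}$ into halves $X$ and $Y$ with $|X|=|Y|\geq s-1$ (this is where $n\geq 2(s-1)$ enters), picks distinct centers $x_1,\dots,x_j$ in $X$, and sends $a_i$ edges from $x_i$ to $a_i$ distinct vertices of $Y$ with all endpoint sets disjoint; then $\deg x_i = k-2+a_i$, every endpoint has degree $k-1$, and the multiset of degrees exceeding $k-2$ recovers the partition. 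Your phrase ``multiset of local degree increments'' is compatible with this, but the proposal never commits to the star structure, and that commitment is exactly the step that carries the injectivity.

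Second, your fix for incomparability---hosting different bundles in cliques of ``distinct, carefully separated orders''---works \emph{against} sinkhood rather than for it. Shrinking clique orders below $k-1$ strictly decreases the degree sum (Corollary~\ref{algor} together with Lemma~\ref{dsum}), and a candidate built on a non-extremal base can genuinely be majorized by one built on the extremal base: the exceptional case of Theorem~\ref{coecsink} ($n\equiv 1\bmod (k-1)$, smallest component $K_2$, added edge with at most one endpoint in a $K_{k-1}$) is precisely such an instance, so the danger you flag is real and your band construction does not dispose of it. The paper sidesteps the whole issue by using one fixed base $H = K_r \cup 2mK_{k-1}$---the maximal-degree-sum edge-maximal graph with $\lambda_c^{(k)}=0$---for \emph{all} partitions: every constructed graph $G$ then has the same degree sum, so the (distinct) sequences $\pi(G)$ are automatically pairwise non-majorizing, and any edge-maximal graph on a different base has strictly smaller degree sum and cannot majorize $\pi(G)$ either. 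Without replacing your degree-band separation by this equal-degree-sum argument (or by a full case analysis in the style of Theorem~\ref{coecsink}), the claim that each $\pi(G_\mu)$ is actually a sink remains open, and with it the bound $|S(P,n)|\geq p(s-1)$.
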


\begin{proof}
Let $m$ and $r$ be integers such that $m\geq (s-1)/(k-1)$ and $0\leq r\leq k-2$. Consider the graph on $n=2m(k-1)+r$ vertices given by $H=K_r\cup 2mK_{k-1}$. Note that when $n\equiv r\bmod(k-1)$, the graph $H$ has the smallest minimum degree of all edge-maximal graphs with respect to $\lambda_c^{(k)}(H)=0$. Thus, by Lemma~\ref{dsum}, $H$ has the largest degree sum of all such graphs. Let $X$ and $Y$ be two distinct copies of $mK_{k-1}$ within $H$, so that $H=K_r\cup X\cup Y$. Note that $|X|=|Y|\geq s-1$.

Construct an edge-maximal graph $G$ with respect to $\lambda_c^{(k)}(G)=s-1$ by adding $s-1$ edges to $H$ in the following way. Let $a_1+a_2+\cdots +a_j$ be any partition of $s-1$. Choose vertices $x_i$ from $X$, for $1\leq i\leq j$. For each $i$, add $a_i$ edges from $x_i$ to $a_i$ distinct vertices in $Y$ in such a way that no two vertices $x_i$ share a neighbor in $Y$.

Let $G'\neq G$ be another edge-maximal graph with respect to $\lambda_c^{(k)}(G')=s-1$. Then, $G'$ is also formed by starting with an edge-maximal graph $H'$ with respect to $\lambda_c^{(k)}(H')=0$ and adding $s-1$ edges. If $H'\neq H$, then $G$ has a larger degree sum than $G'$, and $\pi(G')$ can not majorize $\pi(G)$. However, if $H'\neq H$, then $G$ and $G'$ have the same degree sum, and again $\pi(G')$ can not majorize $\pi(G)$. Thus, every graph of the form $G$ generates a unique sink $\pi(G)$, of which there are $p(s-1)$ many.

\end{proof}

Theorem~\ref{sinkno} implies that the inherent complexity of the best monotone theorem for $k$-component order $s$-edge connected grows superpolynomially with respect to $s$. Thus, even though we could generate best monotone theorems for $s\geq 3$, the number of conditions may be too unwieldy for practical use.

\section*{Acknowledgments}

The author would like to thank undergraduate research student Kenneth Huang for writing a computer program that provided validation for Theorem~\ref{coecsink}. The program can be found on GitHub at {\color{blue}\mbox{https://github.com/Foldenstein/Theorem-3.13}}.

\end{document}